\documentclass{amsart}
\usepackage{amsmath, enumerate}
\usepackage{algorithm}
\usepackage{algpseudocode}
\usepackage{amsfonts}
\usepackage{amssymb}
\usepackage{amsthm}
\usepackage{color}
\usepackage{caption}
\usepackage{graphicx}
\usepackage{url}
\usepackage{verbatim}

\newcommand{\Z}{\mathbb{Z}}

\newcommand{\Q}{\mathbb{Q}}

\newcommand{\R}{\mathbb{R}}

\newcommand{\bp}{\begin{problem}}

\newcommand{\ep}{\end{problem}}

\newcommand{\ba}{\begin{answer}}

\newcommand{\ea}{\end{answer}}

\newcommand{\ben}{\renewcommand{\theenumi}{\alph{enumi}}

\renewcommand{\labelenumi}{(\theenumi)}\begin{enumerate}}

\newcommand{\een}{\end{enumerate}}

\newtheorem{defin}{Definition}[section]
\newtheorem{thm}[defin]{Theorem}
\newtheorem{cor}[defin]{Corollary}
\newtheorem{rem}[defin]{Remark}
\newtheorem{lem}[defin]{Lemma}
\newtheorem{prop}[defin]{Proposition}
\newtheorem{ex}[defin]{Example}

\DeclareCaptionType{result}[Table][List of Results]
\newenvironment{results}{\captionsetup{type=result}\center}{}

\title[On the Conjugacy Search Problem in Certain Metabelian Groups]{On the Conjugacy Problem in Certain Metabelian Groups}
\begin{document}

\author[J. Gryak]{Jonathan Gryak}
\address{Jonathan Gryak, Department of Computational Medicine and Bioinformatics, University of Michigan, Ann Arbor} \email{gryakj@med.umich.edu}

\author[D. Kahrobaei]{Delaram Kahrobaei}
\address{Delaram Kahrobaei, CUNY Graduate Center, PhD Program in Computer Science and NYCCT, Mathematics Department, City University of New York}
\email{dkahrobaei@gc.cuny.edu}

\author[C. Martinez-Perez]{Conchita Martinez-Perez}
\address{Conchita Martinez-Perez, Departamento de Matem\'aticas, University of Zaragoza, Spain}
\email{conmar@unizar.es}

\begin{abstract}
We analyze the computational complexity of an algorithm to solve the conjugacy search problem in a certain family of metabelian groups. We prove that in general the time complexity of the conjugacy search problem for these groups is at most exponential. For a subfamily of groups we prove that the conjugacy search problem is polynomial. We also show that for a different subfamily the conjugacy search problem reduces to the discrete logarithm problem.  
\end{abstract}

\maketitle
\tableofcontents

\section{Introduction}

The conjugacy decision problem is, along with the word and isomorphism
problems, one of the original group-theoretic decision problems introduced by Max Dehn
in 1911. In this paper we will consider a variant of the first called the conjugacy search problem, in which we are given two conjugate elements $g$ and $g_1$ of a group $G$ and are asked to find an element $h$ in $G$ such that $g^h=hgh^{-1}$.  There are groups for which the conjugacy decision problem is unsolvable, whereas the search variant is always solvable. In the case of finitely generated metabelian groups, the conjugacy decision problem is solvable \cite{noskov1982}. 

\

The original motivation for the conjugacy search problem comes from group-based cryptography. In this paper our focus is on the algorithmic aspects of the conjugacy search problem, rather than its potential use in cryptographic applications. In particular, we develop and estimate the computational complexity of an algorithm that solves the conjugacy search problem in a certain family $\mathcal{F}$ of finitely presented metabelian groups (see Section \ref{groups}). 
Note that in the literature there are many algorithmic results available for the conjugacy decision, conjugacy search, and other group-theoretic problems, but primarily for polycyclic groups (\cite{eickostheimer2003}, \cite{handbook}). However, groups in our family need not to be polycyclic. To rectify this, we essentially translate the existing algorithms for metabelian polycyclic groups to a non-polycyclic setting. There are a number of technical difficulties to achieve this, and a large part of the paper is devoted to solving these difficulties. 

\
  
A particular subfamily of $\mathcal{F}$ consists of the following generalization of Baumslag-Solitar groups: 
$$G=\langle q_1,\ldots,q_n,b\mid [q_l,q_t]=1,q_lbq_l^{-1}=b^{m_l},1\leq l,t\leq n\rangle,$$
where $m_1,\ldots,m_n$ are integer numbers.
Observe that such generalized Baumslag-Solitar groups split as semidirect products $B\rtimes Q$ with $Q$ free abelian of finite rank $n$ and $B=\Z[{1\over p_1},\ldots,{1\over p_n}]$.
 In general, groups in $\mathcal{F}$ split as a semidirect product $G=B\rtimes Q$ with $Q$ free abelian of finite rank and where $B$ can be seen as an additive subgroup of $\Q^s$ for some $s$.    Throughout the paper, we will consider $B$ as a $Q$-module with left action.  Upon fixing a basis, the action of $Q$ on $B$ can be described using integral matrices
that commute pairwise. 
Such a group $G$ is polycyclic if and only if the matrices encoding the action of $Q$ have integral inverses \cite{AuslanderHall}.

\

Groups in $\mathcal{F}$ enjoy strong finiteness properties, for example they have finite Pr\"ufer rank (meaning that the number of generators needed to generate any finitely generated subgroup is bounded),
have cohomological type $\text{FP}_\infty$ \cite[Proposition 1]{baumslag1976constructable} (see also the proof of Theorem 8 in the same paper) and are constructible  (i.e., can be constructed in finitely many steps from the trivial group using finite index extensions and ascending HNN-extensions). In fact, our groups are iterated, strictly ascending HNN-extensions of a free abelian group of finite rank. Moreover, any constructible torsion-free split metabelian group of finite Pr\"ufer rank lies in $\mathcal{F}$ and any metabelian group of finite Pr\"ufer rank can be embedded in a group in $\mathcal{F}$ \cite{baumslag1976constructable}.

\

The general strategy of our algorithm consists of using linear representations of our groups so we can utilize known methods from linear algebra, such as the polynomial time solution of the multiple orbit problem given in  \cite{aszl1996multiplicative}. In order to uses these methods we need an efficient means of swapping between the representation of an element of $B$ inside $\Q^s$ and its word representation. After giving the precise definition of the family $\mathcal{F}$ and fixing some notation, we develop the necessary techniques to swap between these representations in Section \ref{groups}. These include a method to decide whether some arbitrary vector in $\Q^s$ belongs to $B$, as well as the ability to determine when a system of linear equations has some solution in $B$. Along the way we show that the computational complexity of these procedures is sufficiently low to switch between word and linear representations as needed.

\

In Section \ref{complexity}, we describe and analyze the computational complexity of our algorithm for the conjugacy search problem in $\mathcal{F}$. In particular, we prove the following theorem:
\begin{thm}\label{exponential} For any $G\in\mathcal{F}$, the time complexity of the conjugacy search problem for conjugate elements $g,g_1\in G$ is at most exponential in the length of $g$ and $g_1$. 
\end{thm}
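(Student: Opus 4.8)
The plan is to translate the conjugacy search problem into a linear-algebraic problem over the $Q$-module $B$, and then to invoke the multiple orbit problem, falling back to a brute-force search when the structure of $B$ prevents a direct reduction. Write $G=B\rtimes Q$ as in the introduction, so that every element has a normal form $(b,q)$ with $b\in B$, $q\in Q$, and $(b,q)(b',q')=(b+{}^{q}b',\,qq')$. A direct computation, using that $Q$ is abelian, gives for $g=(b,q)$ and $h=(c,r)$
\[
hgh^{-1}=\bigl({}^{r}b+(1-{}^{q})c,\;q\bigr),
\]
so the $Q$-component is a conjugacy invariant, and $g=(b,q)$, $g_{1}=(b_{1},q_{1})$ are conjugate if and only if $q_{1}=q$ and there exist $r\in Q$, $c\in B$ with ${}^{r}b-b_{1}\in(1-{}^{q})B$ and $(1-{}^{q})c=b_{1}-{}^{r}b$. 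The first step is therefore to pass from the given words for $g,g_{1}$ to these normal forms using the representation-switching procedures of Section~\ref{groups}; the bit-size of $b,b_{1},q$ is polynomial in the lengths of $g,g_{1}$, and the check $q_{1}=q$ is immediate and must succeed, since $g,g_{1}$ are assumed conjugate.

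The heart of the matter is to produce a suitable $Q$-component $r$. Since $(1-{}^{q})B$ is a $\Z[Q]$-submodule, the solution set $S=\{\,r\in Q:{}^{r}b-b_{1}\in(1-{}^{q})B\,\}$ is, when non-empty, a single coset $r_{0}\cdot\mathrm{Stab}_{Q}(\bar{b})$ of the stabiliser of the image $\bar{b}$ of $b$ in $B/(1-{}^{q})B$, so it suffices to exhibit one $r_{0}$. When $(1-{}^{q})B$ is cut out inside $\Q^{s}$ by rational-linear conditions, which is the situation for the distinguished subfamilies, finding $r_{0}$ is literally an instance of the multiple orbit problem for the commuting integral matrices encoding the $Q$-action, and the polynomial-time algorithm of \cite{aszl1996multiplicative} applies directly. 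In general this breaks down, because $(1-{}^{q})B$ is an additive subgroup of $\Q^{s}$ that need not be defined by $\Q$-linear equations --- precisely the point at which non-polycyclicity of $G$ intervenes --- so the orbit algorithm cannot be applied verbatim. In that case I would instead argue that a non-empty $S$ contains an element $r_{0}$ whose exponents in a fixed basis of $Q$ have absolute value at most exponential in the lengths of $g,g_{1}$, and then enumerate candidates $r$ over a box of that size; for each candidate one tests, in time polynomial in the current data by the membership and linear-system procedures of Section~\ref{groups}, whether $(1-{}^{q})c=b_{1}-{}^{r}b$ has a solution $c\in B$. This enumeration over an exponentially large search space is the dominant cost and is the source of the exponential bound; it is also exactly what the subfamily results remove.

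Finally, given $(r,c)$ I would convert the conjugator $h=(c,r)$ back to a word in the generators of $G$ by Section~\ref{groups}. Since $r$ may carry exponentially large exponents, the vector ${}^{r}b$, and therefore any rational solution $c$ of $(1-{}^{q})c=b_{1}-{}^{r}b$, can have bit-size exponential in the input lengths, so even in the best case the resulting word can be of exponential length; this is consistent with the bound being exponential rather than polynomial. The main obstacle is the middle step: controlling the interplay between the orbit problem for the $Q$-action and the module $B$, namely establishing the a priori exponential bound on a minimal conjugator $r_{0}$ and certifying membership in $(1-{}^{q})B$ efficiently for each candidate. Granting those, adding up the costs --- polynomial-time normal forms, an exponential-size search with polynomial-time per-candidate tests, and a back-conversion polynomial in the (at most exponential) bit-sizes involved --- yields the claimed at-most-exponential complexity.
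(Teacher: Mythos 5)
Your reduction of the problem to the two subproblems (find $r\in Q$ with ${}^{r}b-b_{1}\in(1-{}^{q})B$, then find $c\in B$ with $(1-{}^{q})c=b_{1}-{}^{r}b$) is exactly the paper's decomposition, and your per-candidate test via the linear-system machinery of Section~\ref{groups} matches Propositions~\ref{solutionsystem} and~\ref{complexitysolutionsystem}. But there is a genuine gap at precisely the point you flag as ``the main obstacle'': you assert, without argument, that a nonempty solution set contains an $r_{0}$ whose exponents are at most exponential in the input lengths, and the entire exponential bound rests on that assertion. This is not a routine detail --- it is the mathematical core of the theorem, and the paper devotes most of Section~\ref{complexity} to establishing it. The paper's route is: pass to the quotient $\bar{B}=B/N_{x}B$, show its torsion subgroup $T$ is finite with $|T|\leq K^{\mathcal{L}}$ (Theorem~\ref{bound} and Proposition~\ref{Texponential}, proved via Smith normal form and a Hadamard-type determinant bound after clearing denominators with $d^{\mathcal{L}}$), observe that the candidates one must test form a set of coset representatives of $Q_{2}=C_{Q}(\bar b)$ in $Q_{1}=C_{Q}(\bar b+T)$ whose index is at most $|T|$, and finally produce such representatives of controlled word length by a lattice argument. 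Without some substitute for this chain, your ``enumerate over an exponentially large box'' step has neither a termination guarantee nor a complexity bound.

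A secondary inaccuracy: you present the multiple orbit algorithm of \cite{aszl1996multiplicative} as applicable only to ``distinguished subfamilies'' where $(1-{}^{q})B$ is cut out by $\Q$-linear conditions, with brute force as the general fallback. In the paper the orbit algorithm is used in \emph{every} case, in polynomial time, by first quotienting out $T$ and embedding $\bar{B}/T$ into the vector space $\Q^{s}/N_{x}\Q^{s}$; the failure of $(1-{}^{q})B$ to be $\Q$-linear is absorbed entirely into the finite group $T$. The exponential cost comes only from the subsequent enumeration over at most $|T|$ cosets, not from searching all of $Q$ over a box. So your architecture is close in spirit, but the step that actually makes the theorem true is missing, and the role of the orbit algorithm is misplaced.
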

As a corollary, we also deduce some consequences about conjugator lengths (Corollary \ref{length}).

\

Of course, the complexity bound of Theorem \ref{exponential} is too large for some particular choices of the group $G$. This is the case if $G$ happens to be nilpotent, as the complexity is known to be polynomial (see \cite{sims1994}, we thank the anonymous referee for pointing out this fact). We make this explicit by giving an example of a subfamily of $\mathcal{F}$ consisting of nilpotent groups where our algorithm is also polynomial (see Theorem \ref{polynomial}).

\
 
However, there are some particular cases of groups in $\mathcal{F}$ for which the conjugacy search problem reduces to a type of discrete logarithm problem. This is discussed in Subsection \ref{discretelog}. In particular, this is the case for generalized metabelian Baumslag-Solitar groups given by a relatively simple presentation of the form 
$$
G=\left\langle q_1,q_2,b|b^{q_1}=b^{m_1},b^{q_2}=b^{m_2},[q_1,q_2]=1\right\rangle.
$$

\section{Split Metabelian Groups of Finite Pr\"ufer Rank}\label{groups}


\subsection{The Family $\mathcal{F}$ and Notation} Consider the following group presentation
\begin{equation}\label{F}G=\langle q_1,\ldots,q_n,b_1,\ldots,b_s\mid [q_l,q_t]=1,[b_i,b_j]=1,\mathcal{R}\rangle, \hbox{ with}\end{equation}
$$\mathcal{R}=\{q_lb_iq_l^{-1}=b_1^{m_{l(1,j)}}b_2^{m_{l(2,j)}}\ldots b_s^{m_{l(s,j)}};m_{l(i,j)}\in\Z\},$$
where we require the following extra condition: for $l=1,\ldots,n$ let $M_l$ be the integer matrix encoding the action of $q_l$, that is, the $s\times s$ matrix with $j$-th column $m_{l(1,j)},\ldots,m_{l(s,j)}$. Then the matrices $M_l$ have to commute pairwise.

\

We define the class $\mathcal{F}$ as the class of groups admitting a presentation as (\ref{F}). For a fixed group $G$ with presentation (\ref{F}) we denote by $Q$ the subgroup of $G$ generated by $q_1,\ldots,q_n$ and by $B$ the normal subgroup of $G$ generated by $b_1,\ldots,b_s$. Note that elements in $B$ are precisely those elements expressed by words where the exponent sum of instances of every $q_l$ is zero. From this fact one easily sees that $G=B\rtimes Q$.
We use multiplicative notation for the whole group $G$ but often we use additive notation for $B$. If $c\in B$, $x\in Q$, the conjugation action of the element $x$ on $c$ is denoted
$x\cdot c$ additively or $c^x=xcx^{-1}$ multiplicatively.

Any element in  $G$ 
can be represented by a word of the following type
\begin{equation}\label{form}q_1^{-\alpha_1}\ldots q_n^{-\alpha_n}b_1^{\beta_1}\ldots b_s^{\beta_s}q_1^{\gamma_1}\ldots q_n^{\gamma_n},\end{equation}
with $\alpha_1,\ldots,\alpha_n\geq 0$ and such that whenever $\alpha_i\neq 0$, the element $q_i^{-1}b_1^{\beta_1}\ldots b_s^{\beta_s}q_i$ does not belong to the subgroup generated by $b_1,\ldots,b_s$. 
There is an efficient algorithm (collection) to transform any word in the generators into a word of the previous form representing the same group element: use the relators to move all of the instances of $q_i$ with negative exponents to the left and all the instances of $q_i$ with positive exponents to the right 
(see example \ref{genBS}).


\begin{ex}\label{genBS} Generalized Metabelian Baumslag-Solitar Groups. {\rm Let $m_1,\ldots,m_n$ be positive integers. We call the group given by the following presentation a {\sl generalized metabelian Baumslag-Solitar group} 
$$G=\langle q_1,\ldots,q_n,b\mid b^{q_l}=b^{m_l},\,  [q_l, q_j]=1,\,  1\leq i, j \leq n,\rangle.$$
It is a constructible metabelian group of finite Pr\"ufer rank and $G\cong B\rtimes Q$ with $Q=\langle q_1,\ldots,q_n\rangle\cong\Z^n$ and $B=\Z[m_1^{\pm1},\ldots,m_k^{\pm1}]$ (as additive groups). Let us examine how collection works for these groups. Consider for example the group
$$
G=\langle q_1,q_2,b\mid b^{q_1}=b^{2},b^{q_2}=b^{3},[q_1,q_2]=1\rangle,
$$
with $G\cong\Z\left[\frac{1}{2},\frac{1}{3}\right]\rtimes\Z^2$, and an uncollected word 
$q_2bq_1^{-1}q_2b^{-1}q_1$ in $G$
representing the group element $w$. An iterated use of the relations $q_2b^{\pm 1}=b^{\pm 3}q_2$ and $b^{\pm 1}q_1^{-1}=q_1^{-1}b^{\pm 2}$ together with the commutation relations between $q_1$ and $q_2$  yields
$$
w=q_2bq_1^{-1}q_2b^{-1}q_1=b^3q_2q_1^{-1}b^{-3}q_2q_1=q_1^{-1}b^6b^{-9}q^2_2q_1=q_1^{-1}b^{-3}q_1q^2_2.
$$
}
\end{ex}
\

\begin{ex}\label{galois}{\rm Let $L:\Q$ be a Galois extension of degree $n$ and fix an integral basis $\{u_1,\ldots,u_s\}$ of $L$ over $\Q$.
Then $\{u_1,\ldots,u_s\}$ freely generates the maximal order $\mathcal{O}_L$ as a $\Z$-module. 
Now, we choose integral elements, $q_1,\ldots,q_n$, generating a  free abelian multiplicative subgroup of $L-\{0\}$. Each $q_l$ acts on $L$ by left multiplication and using the  basis $\{u_1,\ldots,u_s\}$,  we may represent this action by means of an integral matrix $M_l$. Let $B$ be the smallest sub $\Z$-module of $L$ closed under multiplication with the elements $q_l$ and  $q_l^{-1}$  and such that $\mathcal{O}_L\subseteq B$, i.e.,
$$B=\mathcal{O}_L[q_1^{\pm 1},\ldots,q_n^{\pm1}].$$
We may then define $G=B\rtimes Q$, where the action of $Q$ on $B$ is given by multiplication by the $q_l$'s. The generalized Baumslag-Solitar groups of the previous example are a particular case of this situation when $L=\Q$. If the elements $q_l$ lie in $\mathcal{O}_L^\times$, which is the group of units of $\mathcal{O}_L$, then the group $G$ is polycyclic.}
 \end{ex}
 
\subsection{Linear Representations}\label{semidirectB} In this subsection, we consider again a group with a presentation as in (\ref{F}). We will show that the subgroup $B$ can be seen as an additive subgroup of $\Q^s$ and see how to swap between representation of its elements as words in the generators and as vectors in $\Q^s$. Recall that $B$ consists precisely of those elements of $G$ that are given by words $w$ so that for each $i=1,\ldots,n$ the exponent sum of $q_i$'s is zero. Using the collection process above, such an element $b$ can be represented by a word
$$q_1^{-\alpha_1}\ldots q_n^{-\alpha_n}b_1^{\beta_1}\ldots n_s^{\beta_s}q_1^{\alpha_1}\ldots q_n^{\alpha_n}.$$
Additively,
 $$b=(q_1^{-\alpha_1}\ldots q_n^{-\alpha_n})\cdot ({\beta_1}b_1+\ldots+ {\beta_s}b_s).$$
Then, define the map $B\hookrightarrow\Q^s$ by mapping $b$ to
\begin{equation}\label{matrixb} v=M_1^{-\alpha_1}\cdots M_n^{-\alpha_n}\begin{pmatrix} \beta_1\\ \vdots\\ \beta_s\\ \end{pmatrix}.\end{equation}
One easily checks that this is a well-defined injection
that can be seen as an explicit recipe to transform words representing elements in $B$ into the corresponding vector of $\Q^s$. We can do something similar for arbitrary elements $g\in G$:
assume that $g$ is given by a word as in (\ref{form})
$$q_1^{-\alpha_1}\ldots q_n^{-\alpha_n}b_1^{\beta_1}\ldots b_s^{\beta_s}q_1^{\gamma_1}\ldots q_n^{\gamma_n},$$
then the following word also yields $g$:
$$q_1^{-\alpha_1}\ldots q_n^{-\alpha_n}b_1^{\beta_1}\ldots b_s^{\beta_s}q_1^{\alpha_1}\ldots q_n^{\alpha_n}q_1^{\gamma_1-\alpha_1}\ldots q_n^{\gamma_n-\alpha_n}.$$
In the semidirect representation we have $g=vx$ with $x=q_1^{\gamma_1-\alpha_1}\ldots q_n^{\gamma_n-\alpha_n}$ and $v$ is as before. 
The complexity of the computation in (\ref{matrixb}) using Gaussian elimination for inverses, standard matrix multiplication, and efficient exponentiation is:
$$
O((n-1)[s^3+s^3\log\max_l(\alpha_l)+s^3\log\max_l(\gamma_l-\alpha_l)]+s^2+s^3).
$$
Now, consider the converse, in which we have $vx$ with $v$ a vector in $\Q^s$. In order to express $vx$ as a word as in (\ref{form})
we first 
describe a particular subset of $\Q^s$ that contains $B$. 
In the following discussion, we identify $B$ with its image in $\Q^s$ and the group generated by $b_1\ldots,b_s$ with  $\Z^s$.

\

For $1\leq l \leq n$, let $d_l$ be the smallest positive integer such that 
$d_lM_l^{-1}$is an integral matrix, i.e., $d_l$ is the lowest common denominator of the matrix entries $m_{l(s,i)}$. Let $d=\prod_ld_l$ (if $G$ is polycyclic, $d=1$). For any $v\in B$,
$$
d^{\alpha_1+\ldots+\alpha_n}v\in\Z^s
$$
thus $v\in\Z[\frac{1}{d}]^s$, in other words, we have 
$$B\subseteq\Z[\tfrac{1}{d}]^s\subset \Q^s.$$


\begin{rem}{\rm This implies that for any $v\in B$, if $t$ is be the smallest positive integer such that $d^tv$ lies in $\Z^s$, then $t$ is bounded by twice the length of $v$ as a word in  (\ref{form}).}
\end{rem}

$B$ can also be constructed from $\Z^s$ and $M=\prod_lM_l$. Observe that
$$
\Z^s\subseteq M^{-1}\Z^s\subseteq\ldots\subseteq M^{-j}\Z^s\subseteq M^{-j-1}\Z^s\subseteq\ldots\subseteq B
$$
and in fact $B=\cup_{j=0}^\infty M^{-j}\Z^s.$ To check this, note that any vector in $B$ has the form $M_1^{-\beta_1}\ldots M_n^{-\beta_n}u$ for some $u\in\Z^s$ and certain $\beta_1,\ldots,\beta_n\geq 0$. Let $\beta=\max\{\beta_1,\ldots,\beta_n\}$, then
$$M_1^{-\beta_1}\ldots M_n^{-\beta_n}u=M^{-\beta}M_1^{\beta-\beta_1}\ldots M_n^{\beta-\beta_n}u=M^{-\beta}w$$
where $w=M_1^{\beta-\beta_1}\ldots M_n^{\beta-\beta_n}v$ lies in $\Z^s$. Consequently, if $q=q_1\ldots q_n$, then the group $B\rtimes \langle q\rangle$ is a strictly ascending HNN extension of $\Z^s$.

\begin{lem}\label{intersection} There is some $\alpha$ depending on $G$ only such that for any $i$,
$$B\cap {1\over d^i}\Z^s\subseteq M^{-i\alpha}\Z^s.$$
Moreover $\alpha\leq s\log d$.
\end{lem}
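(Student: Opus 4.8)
The plan is to intersect the ascending chain $\Z^s\subseteq M^{-1}\Z^s\subseteq M^{-2}\Z^s\subseteq\cdots$ (whose union is $B$) with the \emph{fixed} lattice $\tfrac1{d^i}\Z^s$ and to control how quickly the resulting chain stabilizes. We may assume $d\geq 2$, since if $d=1$ then $B=\Z^s$ and the assertion is trivial with $\alpha=0$.

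First I would fix $i$ and set $C_k:=M^{-k}\Z^s\cap\tfrac1{d^i}\Z^s$ for $k\geq 0$; this is an ascending chain of subgroups of $\tfrac1{d^i}\Z^s$, each containing $C_0=\Z^s$, and its union is $\bigl(\bigcup_{k\geq 0}M^{-k}\Z^s\bigr)\cap\tfrac1{d^i}\Z^s=B\cap\tfrac1{d^i}\Z^s$. Because there are only finitely many subgroups of $\tfrac1{d^i}\Z^s$ containing $\Z^s$ (the index $[\tfrac1{d^i}\Z^s:\Z^s]=d^{is}$ being finite), this chain is eventually constant; I claim moreover that once it repeats it is permanently constant. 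Indeed, suppose $C_{k+1}=C_k$ and take $w\in C_{k+2}$; then $Mw\in M^{-(k+1)}\Z^s$, and since $M$ is an integral matrix $Mw\in M\bigl(\tfrac1{d^i}\Z^s\bigr)\subseteq\tfrac1{d^i}\Z^s$, so $Mw\in C_{k+1}=C_k\subseteq M^{-k}\Z^s$, whence $w\in M^{-(k+1)}\Z^s$ and therefore $w\in C_{k+1}$. Thus there is $\ell^{*}=\ell^{*}(i)$ with $C_0\subsetneq C_1\subsetneq\cdots\subsetneq C_{\ell^{*}}=C_{\ell^{*}+1}=\cdots$, and consequently $B\cap\tfrac1{d^i}\Z^s=C_{\ell^{*}}\subseteq M^{-\ell^{*}}\Z^s$.

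It then remains to bound $\ell^{*}$. Each of the $\ell^{*}$ strict inclusions $C_j\subsetneq C_{j+1}$ $(0\leq j<\ell^{*})$ has index at least $2$, so $2^{\ell^{*}}\leq[C_{\ell^{*}}:C_0]\leq[\tfrac1{d^i}\Z^s:\Z^s]=d^{is}$, giving $\ell^{*}\leq is\log_2 d$. Hence, taking $\alpha:=\lceil s\log_2 d\rceil$, a quantity depending on $G$ only, we get $B\cap\tfrac1{d^i}\Z^s\subseteq M^{-i\alpha}\Z^s$ for every $i$, with $\alpha\leq s\log d$ (reading $\log$ as $\log_2$ and up to the harmless rounding).

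I expect the only genuinely delicate point to be the permanence of stabilization: it is precisely there that the integrality of $M$ enters, through the inclusion $M\bigl(\tfrac1{d^i}\Z^s\bigr)\subseteq\tfrac1{d^i}\Z^s$, and it is what lets one count strict steps against the single bounded index $d^{is}$ rather than against something growing with $i$; everything else is bookkeeping. As an alternative, one can phrase the same argument ``downstairs'': writing $v\in B\cap\tfrac1{d^i}\Z^s$ as $M^{-\ell}u$ with $\ell$ minimal (so that $u\notin M\Z^s$), membership translates into $\bar z\in(\Z/d^i\Z)^s$ lying in $\ker(M^{\ell})$ but not in $\ker(M^{\ell-1})$, where $z=d^i v$, and one bounds the length of the ascending chain of such kernels exactly as above.
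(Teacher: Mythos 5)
Your proof is correct and follows essentially the same route as the paper: you intersect the ascending chain $M^{-k}\Z^s$ with the fixed lattice, use the integrality of $M$ to show the chain is permanently constant once it first repeats, and count the strict steps against the finite index of $\Z^s$ in the lattice. The one (harmless) difference is organizational: the paper proves the case $i=1$ and then extends to general $i$ by induction, whereas you treat all $i$ at once by noting that the stabilization length inside $\tfrac{1}{d^i}\Z^s$ is automatically bounded by $is\log_2 d\leq i\alpha$, which slightly streamlines the argument.
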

\begin{proof} Consider first the case when $i=1$. We have $\Z^s\subseteq{1\over d}\Z^s$ and
$$\Z^s\subseteq M^{-1}\Z^s\cap{1\over d}\Z^s\subseteq\ldots\subseteq M^{-j}\Z^s\cap{1\over d}\Z^s\subseteq M^{-j-1}\Z^s\cap{1\over d}\Z^s\subseteq\ldots\subseteq{1\over d}\Z^s.$$
As the quotient of ${1\over d}\Z^s$ over $\Z^s$ is the finite group $\Z_d\times\ldots\times \Z_d$ of order $d^s$, this sequence stabilizes at some degree, say  $\alpha$. Then 
$$B\cap {1\over d}\Z^s=M^{-\alpha}\Z^s\cap {1\over d}\Z^s\subseteq M^{-\alpha}\Z^s$$
as desired. Moreover, we claim that stabilizes precisely at the first $\alpha$ such that
$$M^{-\alpha}\Z^s\cap{1\over d}\Z^s= M^{-\alpha-1}\Z^s\cap{1\over d}\Z^s.$$
To demonstrate, let $b\in M^{-\alpha-2}\Z^s\cap{1\over d}\Z^s$. Then 
$$Mb\in M^{-\alpha-1}\Z^s\cap{1\over d}\Z^s=M^{-\alpha}\Z^s\cap{1\over d}\Z^s$$
 thus $b\in M^{-\alpha-1}\Z^s\cap{1\over d}\Z^s=M^{-\alpha}\Z^s\cap{1\over d}\Z^s$. Repeating the argument implies that for all $\beta>\alpha$,
$$M^{-\alpha}\Z^s\cap{1\over d}\Z^s=M^{-\beta}\Z^s\cap{1\over d}\Z^s.$$
As a consequence,  $\alpha$ is bounded by the length of the longest chain of proper subgroups in $\Z_d\times\ldots\times \Z_d$, i.e., $\alpha\leq\log(d^s)=s\log d$. 

\

For the case of an arbitrary $i$ we argue by induction. Let $b\in B\cap  {1\over d^i}\Z^s$, then $db\in B\cap  {1\over d^{i-1}}\Z^s$ and by induction we may assume that $db\in M^{-(i-1)\alpha}\Z^s$, thus  $M^{(i-1)\alpha}db=v\in\Z^s$. Then
$${1\over d}v\in B\cap  {1\over d}\Z^s\subseteq M^{-\alpha}\Z^s.$$
Therefore 
$$M^{\alpha}M^{(i-1)\alpha} b={1\over d}M^{i\alpha} v\in\Z^s$$
and $b\in M^{-i\alpha}\Z^s$.
\end{proof}

It is easy to construct examples with $\alpha\neq 1$ for $\alpha$ as in Lemma \ref{intersection}:

\begin{ex}\label{exalpha}{\rm

Consider the group $G\in\mathcal{F}$ given by the following presentation:
\begin{align*}
G&=\langle q_1,q_2,q_3,b_1,b_2,b_3\mid [q_l,q_t]=1,[b_i,b_j]=1,\mathcal{R}\rangle\, \hbox{with}\\
&\mathcal{R}=\{b_1^{q_1}=b_1^2,b_2^{q_2}=b_2^4,b_3^{q_3}=b_3^{16},b_j^{q_l}=b_j\text{ for }j\neq l\}\\
\end{align*}
From the presentation above $s=3$. The linear representations of the $q_l$'s (and their product $M$) are then:
$$
M_1=\left[\begin{array}{ccc}
2 & 0 & 0\\
0& 1 & 0\\
0& 0 &1
\end{array}\right]
M_2=\left[\begin{array}{ccc}
1 & 0 & 0\\
0& 4 & 0\\
0& 0 &1
\end{array}\right]
M_3=\left[\begin{array}{ccc}
1 & 0 & 0\\
0& 1 & 0\\
0& 0 &16
\end{array}\right];
M=\left[\begin{array}{ccc}
2 & 0 & 0\\
0& 4 & 0\\
0& 0 &16
\end{array}\right].
$$
From visual inspection of $M$ it is clear that $d=16$. Moreover, it is easy to check that ${1\over 16}\Z^s\subseteq B$ and that in fact
$${1\over 16}\Z^s={1\over 16}\Z^s\cap B\subseteq M^{-4}\Z^s.$$
Moreover $4$ is smallest possible in these conditions thus $\alpha=4$.
}\end{ex}

In determining whether a vector $v\in\Q^s$ lies in $B$, it is clear from the previous discussion that a necessary condition is that $v$ belongs to $\Z[{1\over d}]^s$, and therefore there exists a $t>0$ such that $v\in {1\over d^t}\Z^s$. In the particular case when $v$ is integral, then $v\in B$ and the coordinates of $v$ are the exponents of the $b_j$'s in an expression for $v$ as in (\ref{form}).

\

If $v$ is strictly rational, we can perform the following procedure to check whether $v\in{1\over d^t}\Z^s$ for some $t$ and to find the smallest possible such $t$. First, compute the least common multiple of the denominators of the entries of $v$. By reducing if necessary, we may assume that
$$v={1\over m}(v_1,\ldots,v_s)$$
with the $v_j$ integers so that no prime divides all of $m,v_1,\ldots, v_s$ simultaneously. Note that for an arbitrary $t$, $d^tv$ is integral if and only if $m$ divides $d^tv_j$ for each $j=1\ldots,n$, and by the choice of $m$ and $v_1,\ldots,v_s$ this is equivalent to $m$ dividing $d^t$. Therefore to decide whether $v$ lies in ${1\over d^t}\Z^s$ for some $t$ we 
only have to check whether $d^t=0$ modulo $m$ for some $t\geq 0$. Moreover, it is easy to check that if there is such a $t$ then there is some so that $t\leq m$, in other words, 
if explicit factorizations of $m$ and $d$ are not available, we need only compute $d^t$ for $1\leq i\leq m$ and in the case when none of these values is a multiple of $m$ we conclude that $v$ does not belong to $\Z[{1\over d}]^s$.

\begin{lem}\label{vinB} Let  $v\in\Z[{1\over d}]^s$ and  $t$ the smallest possible integer such that $d^tv$ is integral. Then $v\in B$ if and only if  $$M^{ts\lfloor\log d\rfloor}v\in\Z^s$$
where $M=M_1M_2\ldots M_n$.
The complexity of this computation is polynomial, specifically $O((n-1) s^3\log ts\lfloor\log d\rfloor)$. (Alternatively, the same result holds true but with $\alpha$ instead of $s\lfloor\log d\rfloor$).\end{lem}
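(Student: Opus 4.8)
The plan is to combine Lemma~\ref{intersection} with the elementary divisibility observation made just before the statement. Given $v\in\Z[{1\over d}]^s$ with $t$ minimal such that $d^tv\in\Z^s$, we have $v\in{1\over d^t}\Z^s$, so if $v\in B$ then Lemma~\ref{intersection} (applied with $i=t$) gives $v\in B\cap{1\over d^t}\Z^s\subseteq M^{-t\alpha}\Z^s$, and since $\alpha\le s\log d$ and $M^{-t\alpha}\Z^s\subseteq M^{-ts\lfloor\log d\rfloor}\Z^s$ (because the chain $M^{-j}\Z^s$ is ascending, so larger exponents give larger lattices — one must be slightly careful and use that $ts\lfloor\log d\rfloor\ge t\alpha$, which holds as soon as $\alpha\le s\lfloor\log d\rfloor$; if $\alpha=s\log d$ is not an integer one rounds up, and this is exactly why the parenthetical ``alternatively with $\alpha$'' remark is there). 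Hence $M^{ts\lfloor\log d\rfloor}v\in\Z^s$. This is the ``only if'' direction.

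For the converse, suppose $M^{ts\lfloor\log d\rfloor}v\in\Z^s$. Writing $k=ts\lfloor\log d\rfloor$, this says $v\in M^{-k}\Z^s$, and we showed earlier in the section that $M^{-k}\Z^s\subseteq\bigcup_{j\ge 0}M^{-j}\Z^s=B$; so $v\in B$ directly. Thus both implications are immediate once the set-theoretic inclusions from Lemma~\ref{intersection} and from the HNN-description $B=\cup_j M^{-j}\Z^s$ are in hand; the only real content is bookkeeping with the exponent $t\alpha$ versus $ts\lfloor\log d\rfloor$.

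For the complexity claim, the procedure is: form $M=M_1\cdots M_n$ (this costs $(n-1)$ multiplications of $s\times s$ integer matrices), compute $M^{k}$ by fast exponentiation in $O(\log k)$ matrix multiplications, and apply it to $v$; here $k=ts\lfloor\log d\rfloor$, so $\log k=O(\log(ts\lfloor\log d\rfloor))$, giving the stated $O((n-1)s^3\log(ts\lfloor\log d\rfloor))$ after absorbing the cost of a single $s\times s$ multiply as $O(s^3)$. One should note that $t\le m$ (the reduced common denominator of $v$) is bounded polynomially in the input size by the discussion preceding the lemma, so the overall bound is genuinely polynomial in the bit-length of $v$ and of the presentation data; I would state this explicitly, perhaps treating the integer entries' bit-growth under the matrix powers as part of the ``standard matrix multiplication'' cost model already adopted in the computation of~(\ref{matrixb}).

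The main obstacle — really the only subtlety — is the interplay between the bound $\alpha\le s\log d$ from Lemma~\ref{intersection}, which need not be an integer, and the integer exponent $ts\lfloor\log d\rfloor$ actually used: one must verify $t\alpha\le ts\lfloor\log d\rfloor$ does \emph{not} always hold (it can fail by the rounding), which is precisely why the lemma is phrased with the floor and why the parenthetical alternative with the exact $\alpha$ is offered. I would resolve this by invoking Lemma~\ref{intersection} in the sharp form $B\cap{1\over d^i}\Z^s\subseteq M^{-i\alpha}\Z^s$ with the \emph{integer} $\alpha$ it produces (the first index at which the chain stabilizes, which is an integer by construction and satisfies $\alpha\le s\log d$, hence $\alpha\le s\lfloor\log d\rfloor$ unless $\log d$ is itself very close to an integer — in fact $\alpha\le s\lfloor\log d\rfloor$ always holds because $\alpha$ is an integer bounded by the length of the longest proper chain in $\Z_d^s$, which is at most $s\lfloor\log_2 d\rfloor$); once that integrality is nailed down, the inclusion $M^{-t\alpha}\Z^s\subseteq M^{-ts\lfloor\log d\rfloor}\Z^s$ follows from monotonicity of the chain and the proof is complete.
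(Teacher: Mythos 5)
Your proof is correct and follows essentially the same route as the paper: Lemma~\ref{intersection} gives the forward direction via $v\in M^{-t\alpha}\Z^s$ together with $ts\lfloor\log d\rfloor-t\alpha\geq 0$, the description $B=\bigcup_{j\geq 0}M^{-j}\Z^s$ makes the converse immediate, and fast exponentiation yields the complexity bound. Your extra care in verifying $\alpha\leq s\lfloor\log d\rfloor$ (via the fact that $\alpha$ is bounded by the longest subgroup chain in $\Z_d\times\cdots\times\Z_d$, an integer at most $s\lfloor\log_2 d\rfloor$) addresses a point the paper's proof passes over silently, and your resolution is sound.
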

\begin{proof}  Lemma \ref{intersection} implies that $v\in B$ if and only if $M^{t\alpha}v$ is integral. Thus if $v\in B$,
$$M^{ts\lfloor\log d\rfloor}v=M^{(ts\lfloor\log d\rfloor-t\alpha)} M^{t\alpha}v$$
is integral because $ts\lfloor\log d\rfloor-t\alpha\geq 0$. The converse is obvious.

\

Regarding the time complexity, we have to compute the $(ts\lfloor\log d\rfloor v)$-th power of the matrix $M$. The complexity estimation is obtained using standard matrix multiplication and efficient exponentiation. 
\end{proof}

\begin{rem}{\rm Note that the exponent $ts\lfloor\log d\rfloor$ is just an upper bound and often a much smaller value suffices to obtain an expression of a given $v\in B$ as a word on the generators of $G$.  Consider for example the group of Example \ref{exalpha} and  the vector $v\in\Q^3$:
$$
v'=\left[ \frac{1}{32},\frac{3}{64},\frac{5}{16}\right].
$$
Here, $t=2$, $s=3$, $d=16$ and $d=$ thus  $ts\lfloor\log d\rfloor=24$ but note that already $M^5v$ is integral.
}\end{rem}

\subsection{Solving Linear Systems} To finish this section and for future reference, we are going to consider the following problem. Assume that we have a square $s\times s$ integral matrix $N$ that commutes with all the matrices $M_l$ and a rational  column vector $u\in\Q^s$, and we want to determine if the linear system 
\begin{equation}\label{originalsystem}NX=u\end{equation}
has some solution $v\in\Q^s$ that lies in $B$. 
To solve this problem, we will use a standard technique to solve these kind of systems in $\Z$.  The Smith normal form for $N$ is a diagonal matrix $D$ with diagonal entries $k_1,\ldots,k_r,0,\ldots,0$, such that $0<k_j$ for $0\leq j\leq r$ and each $k_j$ divides the next $k_{j+1}$, with $r$ being the rank of $N$. Moreover, there are invertible matrices $P$ and $Q$ in $\text{SL}(s,\Z)$ such that $D=QNP$. 

We set $$a=\text{max}\{|a_{ij}|\mid a_{ij}\text{ entry of }N\}.$$

\begin{lem}\label{smith1} Let $N$ be any integral $s\times s$ matrix and let $D=\text{diag}(k_1,\ldots,k_r,0,\ldots,0)$ be its Smith normal form. Then
$$k_1\ldots k_r\leq \sqrt{s}a^s$$
\end{lem}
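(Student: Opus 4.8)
The plan is to bound the product $k_1\cdots k_r$ of the nonzero Smith invariants of $N$ by the largest nonzero $r\times r$ minor of $N$, and then bound that minor via Hadamard's inequality. First I would recall the standard fact that for an integral matrix $N$ the $j$-th determinantal divisor $\Delta_j$ — the gcd of all $j\times j$ minors of $N$ — satisfies $k_1\cdots k_j = \Delta_j$; in particular $k_1\cdots k_r = \Delta_r$ divides every $r\times r$ minor of $N$, and since $N$ has rank $r$ there is at least one nonzero such minor, so $k_1\cdots k_r \le |\det N'|$ for some $r\times r$ submatrix $N'$ of $N$.

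Next I would estimate $|\det N'|$. Each column of $N'$ is a vector in $\R^r$ whose entries are among the entries of $N$, hence has Euclidean norm at most $\sqrt{r}\,a \le \sqrt{s}\,a$. By Hadamard's inequality, $|\det N'|$ is at most the product of the Euclidean norms of its columns, which is at most $(\sqrt{r}\,a)^r \le (\sqrt{s}\,a)^r$. This already gives a clean bound; to land exactly on the stated $\sqrt{s}\,a^s$ one uses $r \le s$ together with $a \ge 1$ (if $a = 0$ then $N = 0$, $r = 0$, and the empty product is $1 \le \sqrt{s}\,a^s$ fails, so one should implicitly assume $N \ne 0$, or read the inequality as vacuous/trivial in that degenerate case). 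For $N \ne 0$ we have $a \ge 1$, so $(\sqrt{r}\,a)^r \le r^{r/2} a^r \le s^{r/2} a^s \le s^{s/2} a^s$; and since the statement writes $\sqrt{s}\,a^s = s^{1/2}a^s$, the cleanest route is actually to observe $r^{r/2} \le \sqrt{s}$ when we are careful — but in fact $r^{r/2}$ can exceed $\sqrt{s}$, so I would instead present the honest bound $k_1\cdots k_r \le s^{s/2} a^s$ or, matching the paper, note $k_1\cdots k_r \le \sqrt{s}\, a^s$ only holds with the convention that each $|k_i| \le$ (largest entry magnitude in row $i$) — i.e. I would apply Hadamard rowwise to get $\det N' \le \prod_{i=1}^r (\sqrt{s}\,a)$ is wrong too; the correct Hadamard gives $\prod \|\text{col}\|$, so the safe statement is $k_1\cdots k_r \le (\sqrt{s}\,a)^s$.

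The main obstacle is therefore purely bookkeeping with the exponents and the $\sqrt{s}$ factor: Hadamard naturally produces $(\sqrt{s}\,a)^r$, and collapsing this to the paper's $\sqrt{s}\,a^s$ requires either a sharper argument (e.g. using that the Smith form has $P,Q\in \mathrm{SL}(s,\Z)$, so $k_1\cdots k_r$ divides $\det N$ when $r=s$, giving $\le a^s$ times a dimensional constant) or simply reading $\sqrt{s}$ as shorthand for "a dimensional constant at most $s^{s/2}$". I would write the proof to establish $k_1\cdots k_r \le \sqrt{s}\,a^s$ in the form actually needed downstream — namely as a crude upper bound feeding into a logarithm — and flag that the constant is not optimized. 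So the proof I would give is: (1) $k_1\cdots k_r = \Delta_r$ divides some nonzero $r\times r$ minor $\det N'$; (2) Hadamard: $|\det N'| \le \prod_{i=1}^{r}\|N'_{\cdot i}\|_2 \le (\sqrt{r}\,a)^r \le (\sqrt{s}\,a)^s$ (using $a\ge 1$, $r\le s$); (3) conclude $k_1\cdots k_r \le (\sqrt{s}\,a)^s$, which in particular is $\le \sqrt{s}\,a^s$ under the paper's reading — and I would double-check against how the bound is used in the subsequent complexity estimate to confirm the constant suffices.
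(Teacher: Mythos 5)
Your proof is correct and takes exactly the paper's route: identify $k_1\cdots k_r$ with the $r$-th determinantal divisor, hence a divisor of some nonzero $r\times r$ minor, and bound that minor by Hadamard's inequality applied to the columns. Your worry about the constant is also well founded: the paper's own proof stops at $|\det N_1|\leq \sqrt{r}^{\,r}a^r$ and never bridges to the stated $\sqrt{s}\,a^s$, so the lemma as literally written is a misprint for $\sqrt{s}^{\,s}a^s=s^{s/2}a^s$ (and, as you note, it also silently assumes $N\neq 0$ so that $a\geq 1$). This discrepancy is harmless downstream --- the bound only feeds into the constant $K$ of Proposition \ref{Texponential} and the estimate of $t_1$ in Remark \ref{iforv2}, neither of which is sensitive to replacing $\sqrt{s}$ by $s^{s/2}$ --- so your final form $k_1\cdots k_r\leq(\sqrt{s}\,a)^s$ is the right statement to prove and suffices for everything the paper does with it.
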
 
\begin{proof} It is well known that the product $k_1\ldots k_r$ is the greatest common divisor of the determinants of the nonsingular $r\times r$ minors of the matrix $N$. Let $N_1$ be one of those minors. Then
$$k_r\leq k_1\ldots k_r\leq|\text{det}N_1|.$$
Now, the determinant of the matrix $N_1$ is bounded by the product of the norms of the columns $c_1,\ldots,c_r$ of the matrix (this bound is due to Hadamard, see for example \cite{Horn1985}) so we have
$$|\text{det}N_1|\leq\prod_{j=1}^r\|c_j\|\leq\sqrt{r}^ra^r.$$
\end{proof}

Recall that we are assuming that $N$ commutes with all the matrices $M_l$. Under this assumption we claim that we can solve the problem above by using Lemma \ref{vinB}. To demonstrate, let $P$ and $Q$ be invertible matrices in $\text{SL}(s,\Z)$ such that $D=QNP=\text{diag}(k_1,\ldots,k_r,0,\ldots,0)$
is the Smith normal form of $N$. Our system can then be transformed into 
\begin{equation}\label{smithsystem} D\tilde X= \begin{scriptsize} \begin{pmatrix}
0&0  \\
    0&  D_2  \\
  \end{pmatrix} 
  \end{scriptsize}\tilde X=Qu
\end{equation}
with $\tilde X=P^{-1}X$. At this point, we see that the system has some solution if and only if the first $s-r$ entries of $Qu$ vanish. 
Assume that this is the case and let $v_2$ be the unique solution to the system 
\begin{equation}\label{smallsystem}D_2\tilde X_2=(Qu)_2\end{equation}
 where the subscript $2$ in $\tilde X$ and $Qu$ means that we take the last $r$ coordinates only. Then
 $$v_2=D_2^{-1}(Qu)_2.$$

 The set of all the rational solutions to (\ref{originalsystem}) is 
$$\Big\{P
  \begin{scriptsize}\begin{pmatrix}
v_1  \\
      v_2  \\
  \end{pmatrix} 
  \end{scriptsize}\mid v_1\in\Q^{s-r}
\Big\}.$$
Equivalently, this set can be written as $$v+\text{Ker}N\, \text{
where }\, v=P \begin{scriptsize}\begin{pmatrix}
0 \\
      v_2  \\
  \end{pmatrix} 
  \end{scriptsize}.$$

Observe that the columns of $P$ give a new basis of $\Z^s$ that can be used to define $B$ instead of $b_1,\ldots,b_s$. In this new basis the action of each $q_l$ is encoded by the matrix $P^{-1}M_lP$. The fact that $N$ commutes with each $M_l$ implies that $M_l$ leaves $\text{Ker}N$ (setwise) invariant. By construction,  $\text{Ker}N$ is generated by the first $s-r$ columns of $P$ and therefore each $P^{-1}M_lP$ has the following block upper triangular form:
$$P^{-1}M_lP= \begin{scriptsize}\begin{pmatrix}
A_l&B_l  \\
     0& C_l  \\
  \end{pmatrix} 
  \end{scriptsize}.$$
Moreover, $C_l$ is just the $r\times r$ matrix associated with the action of $q_l$ in the quotient $\Q^s/\text{Ker}N$, written in the basis obtained from the last $r$ columns of $P$.

\begin{prop}\label{solutionsystem} A solution to the system (\ref{smithsystem}) exists in $B$ if and only if $v_2\in\Z[{1\over d}]^r$ and 
$$C^{tr\lfloor\log d\rfloor}v_2\in\Z^r,$$
with $C=\prod_lC_l$ and $t$ the smallest possible integer such that $d^tv_2$ is integral. (We can use $s$ instead of $r$).
\end{prop}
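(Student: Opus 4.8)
The plan is to reduce solvability of (\ref{originalsystem}) in $B$ to the membership test of Lemma \ref{vinB}, applied not to $B$ but to the quotient module attached to the matrices $C_l$. (I assume throughout the consistency condition noted above, that the first $s-r$ entries of $Qu$ vanish, so that $v_2$ is defined; otherwise (\ref{originalsystem}) has no rational solution at all, let alone one in $B$.) First I would rewrite $B$ in the basis given by the columns of $P$: since $P\in\mathrm{SL}(s,\Z)$ we have $P^{-1}\Z^s=\Z^s$ and $P^{-1}M^{-j}P=(P^{-1}MP)^{-j}$, so $P^{-1}B=\cup_{j\geq 0}(P^{-1}MP)^{-j}\Z^s$. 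As each $P^{-1}M_lP$ is block upper triangular with diagonal blocks $A_l$ and $C_l$, so is the product $P^{-1}MP$, with diagonal blocks $A=\prod_lA_l$ and $C=\prod_lC_l$; moreover the $C_l$ commute pairwise (being the matrices of the action of $Q$ on $\Q^s/\ker N$), so $\bar B:=\cup_{j\geq 0}C^{-j}\Z^r$ is again the module ``$B$'' of a presentation of the type studied in this section.

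Next I would push this through the projection $\pi\colon\Q^s\to\Q^r$ onto the last $r$ coordinates. The inverse of a block upper triangular matrix is block upper triangular with the inverses of the diagonal blocks, so $(P^{-1}MP)^{-j}$ has diagonal blocks $A^{-j}$ and $C^{-j}$, and applying $\pi$ to $(P^{-1}MP)^{-j}\Z^s$ yields exactly $C^{-j}\Z^r$; hence $\pi$ maps $P^{-1}B$ onto $\bar B$. Now the rational solution set of (\ref{smithsystem}) is $\{(v_1,v_2)^T:v_1\in\Q^{s-r}\}$, a full coset of $\ker\pi=\Q^{s-r}\times\{0\}$ whose image under $\pi$ is the single point $v_2$. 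Therefore it meets $P^{-1}B$ if and only if $v_2\in\pi(P^{-1}B)=\bar B$: one direction is immediate by applying $\pi$, and for the converse, if $v_2\in\bar B$ pick $w\in P^{-1}B$ with $\pi(w)=v_2$, so that $w=(w_1,v_2)^T$ already lies in the solution set. Since $X=P\tilde X$ lies in $B$ exactly when $\tilde X$ lies in $P^{-1}B$, this shows that (\ref{smithsystem}) has a solution in $B$ if and only if $v_2\in\bar B$.

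It then remains to decide $v_2\in\bar B$, which is precisely the situation of Lemma \ref{vinB} for the integral matrix $C$ and the module $\bar B\subseteq\Q^r$. The lcm of the denominators of the entries of $C_l^{-1}$ — the bottom-right block of $P^{-1}M_l^{-1}P$, whose entries are $\Z$-combinations of the entries of $M_l^{-1}$ — divides $d_l$, so the analogue of $d$ for $C$ divides $d$; in particular $\bar B\subseteq\Z[\tfrac1d]^r$. Rerunning the proof of Lemma \ref{intersection} with $C$ in place of $M$ produces $\bar\alpha$ with $\bar B\cap\tfrac1{d^i}\Z^r\subseteq C^{-i\bar\alpha}\Z^r$ and, by the same chain-length count, $\bar\alpha\leq r\lfloor\log d\rfloor$. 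Hence, with $t$ the least integer making $d^tv_2$ integral, $v_2\in\bar B$ iff $C^{t\bar\alpha}v_2\in\Z^r$; since $C$ is integral and $tr\lfloor\log d\rfloor\geq t\bar\alpha$, this is equivalent to $C^{tr\lfloor\log d\rfloor}v_2\in\Z^r$ (and the identical argument works with $s$ instead of $r$). Combining this with the previous paragraph yields the proposition.

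The hard part will be the change-of-basis bookkeeping, specifically verifying that $\pi$ carries $P^{-1}B$ \emph{onto} $\bar B=\cup_{j\geq 0}C^{-j}\Z^r$; this surjectivity is exactly what allows one to replace the a priori infinitely many candidate solutions $(v_1,v_2)^T$ by the single finite membership test $v_2\in\bar B$. Everything after that is a transcription of Lemmas \ref{intersection} and \ref{vinB} from $M$ to the diagonal block $C$.
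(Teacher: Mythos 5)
Your proof is correct and follows essentially the same route as the paper's: both reduce solvability in $B$ to membership of $v_2$ in the quotient module $\bigcup_{j\geq 0} C^{-j}\Z^r$ via the block upper triangular form of $P^{-1}M_lP$, and both then invoke Lemma \ref{vinB} for that membership test (using the same $d$, since each $d_lC_l^{-1}$ is integral). The only cosmetic difference is that you package the ``if'' direction as surjectivity of the projection $P^{-1}B\to\bigcup_{j\geq 0} C^{-j}\Z^r$, where the paper instead exhibits the explicit lift $v_1=-A^{-1}Sv_2$; these amount to the same computation.
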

\begin{proof} Assume first that $C^{tr\lfloor\log d\rfloor}v_2\in\Z^r$, with $t$ as above. We have
$$P^{-1}M^{t\alpha}P= \begin{scriptsize}\begin{pmatrix}
A&S  \\
     0& C^{tr\lfloor\log d\rfloor}\\
  \end{pmatrix} 
  \end{scriptsize}$$
for certain $(s-r)\times r$ matrix $S$ and certain $(s-r)\times (s-r)$ invertible matrix $A$, with $M=\prod_lM_l$ as before. Therefore
$$P^{-1}M^{tr\lfloor\log d\rfloor}P\tilde X= \begin{scriptsize}\begin{pmatrix}
A&S  \\
     0& C^{tr\lfloor\log d\rfloor}\\
  \end{pmatrix} \begin{pmatrix}v_1\\ v_2\end{pmatrix}=\begin{pmatrix}Av_1+Sv_2\\ C^{tr\lfloor\log d\rfloor}v_2\end{pmatrix}.
  \end{scriptsize}$$
  This means that now we only have to  find a $v_1\in\Q^{s-r}$ such that $Av_1+Sv_2\in\Z^s$. To do it, observe that it suffices to take $v_1=-A^{-1}Sv_2'$.

\

Conversely, assume that some $P\begin{scriptsize}\begin{pmatrix}v_1\\ v_2\end{pmatrix}
  \end{scriptsize}$ lies in $B$. Then some product of positive powers of the $M_l$'s transforms  $P\begin{scriptsize}\begin{pmatrix}v_1\\ v_2\end{pmatrix}
  \end{scriptsize}$ into an integral vector, thus there is a product of the $C_l$'s that transforms $v_2$ into an integral vector. 
  We may use now Lemma \ref{vinB} applied to $\Q^r=\Q^s/\text{Ker}N$ with respect to the action of the matrices $C_l$ to conclude that
  $v_2\in\Z[{1\over d}]^r$ and $$C^{tr\lfloor\log d\rfloor}v_2\in\Z^r,$$
with $t$ the smallest possible integer such that $d^tv_2$ is integral. (Note that $dC_l^{-1}$ is integral so we can use the same $d$ for this quotient as for the original group.)\\
\end{proof}

\begin{rem}\label{iforv2}{\rm Observe that, as $N$ is integral, a necessary condition for (\ref{originalsystem}) to have some solution in $B$ is that $u$ lies in $\Z[{1\over d}]^d$.
Let $t_0$ be such that $d^{t_0}u$ is integral. Then $d^{t_0}\text{det}(D_2)v_2$ is also integral. If $v_2$ lies in $\Z[{1\over d}]^r$, it means that for some $t_1$ such that $d^{t_1}\leq\text{det}(D_2)$, we have that $d^{t_0+t_1}v_2$ is integral. By Lemma \ref{smith1} $\text{det}(D_2)\leq \sqrt{s}a^s$, thus $t_1\leq\sqrt{s}a^s$. As a consequence, if $t$ is as in Proposition \ref{solutionsystem}, we have
$$t\leq t_0+\sqrt{s}a^s.$$
}\end{rem}

Now we are ready to show:

\begin{prop}\label{complexitysolutionsystem} There is an algorithm to decide whether the system (\ref{originalsystem}) has some solution in $B$ and to compute that solution. The complexity of this algorithm is polynomial,  specifically
$$
O(s^6\log sa+ (s-r)^5 +(s-r)^3 + (n-1)[s^3\log ts\log d+1]+r^3).
$$
where $t\leq t_0+\sqrt{s}a^s$ and $t_0$ is such that $d^{t_0}u$ is integral. (If there is no such $t_0$ then the system has no solution in $B$).
\end{prop}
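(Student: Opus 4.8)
The plan is to bolt together the ingredients developed above into one algorithm and then tally the cost of each stage. First I would compute the Smith normal form $D=QNP$ of $N$ together with the transition matrices $P,Q\in\mathrm{SL}(s,\Z)$; a standard algorithm does this in $O(s^6\log sa)$ bit operations, and by Lemma~\ref{smith1} the nonzero diagonal entries $k_1,\dots,k_r$ of $D$ satisfy $k_1\cdots k_r\le\sqrt{s}\,a^s$, which keeps the entries of $D_2^{-1}$, $P$ and $Q$ of size polynomial in $s$ and $\log a$. Passing to the transformed system (\ref{smithsystem}), I test whether the first $s-r$ coordinates of $Qu$ vanish (a matrix--vector product, cheap); if they do, I form $v_2=D_2^{-1}(Qu)_2$ by $r$ divisions, the bit-cost of which is the $r^3$ term.

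Next I invoke the criterion of Proposition~\ref{solutionsystem}. I first decide whether $v_2\in\Z[\tfrac1d]^r$ by the modular test described before Lemma~\ref{vinB}: write $v_2$ with a common denominator $m$ in lowest terms and check whether $d^t\equiv 0\pmod m$ for some $t$. Here I use Remark~\ref{iforv2}: it suffices to try $t\le t_0+\sqrt{s}\,a^s$, where $t_0$ is least with $d^{t_0}u$ integral (and if no such $t_0$ exists there is no solution in $B$). Assuming $v_2\in\Z[\tfrac1d]^r$ with $t$ minimal making $d^tv_2$ integral, I then build $C=C_1\cdots C_n$ as the lower-right $r\times r$ blocks of the conjugates $P^{-1}M_lP$ --- equivalently I form $M=\prod_lM_l$ ($n-1$ multiplications, the source of the $(n-1)$ factor) --- and compute the power $C^{tr\lfloor\log d\rfloor}$ by fast exponentiation, contributing the $\log(ts\log d)$ factor. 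Testing whether $C^{tr\lfloor\log d\rfloor}v_2\in\Z^r$ decides, by Proposition~\ref{solutionsystem}, whether a solution lies in $B$; if it does, the proof of that proposition instructs me to set $v_1=-A^{-1}Sv_2$, where $A$ is the $(s-r)\times(s-r)$ block of $P^{-1}M^{tr\lfloor\log d\rfloor}P$ --- inverting $A$ by Gaussian elimination accounts for the $(s-r)^5$ term (counting bit operations) and the remaining matrix--vector products for the $(s-r)^3$ term --- and then recover $v$ from $v_1$ and $v_2$ via the change of basis $P$.

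Correctness is immediate from Proposition~\ref{solutionsystem} and the discussion preceding it, so the substance is the complexity bookkeeping, and the one genuine obstacle I foresee there is controlling the bit-lengths of the integers that occur rather than merely counting arithmetic operations. The delicate quantities are the entries of $P$, $Q$ and $A^{-1}$, and above all the entries of $C^{tr\lfloor\log d\rfloor}$ and of $C^{tr\lfloor\log d\rfloor}v_2$, whose size grows with the (possibly huge) exponent $tr\lfloor\log d\rfloor$. To tame these I would combine the determinant bound of Lemma~\ref{smith1}, the Hadamard estimate it rests on, and the bound $t\le t_0+\sqrt{s}\,a^s$ of Remark~\ref{iforv2}, so that every intermediate number has length polynomial in $s$, $n$, $\log a$, $\log d$ and $t_0$; once that is established, summing the per-stage costs yields the displayed bound.
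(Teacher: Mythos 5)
Your proposal is correct and follows essentially the same route as the paper's proof: compute the Smith normal form in $O(s^6\log sa)$, form $v_2=D_2^{-1}(Qu)_2$, apply the criterion of Proposition~\ref{solutionsystem} with the bound on $t$ from Remark~\ref{iforv2}, build $C$ with $n-1$ multiplications and test integrality of $C^{tr\lfloor\log d\rfloor}v_2$ by fast exponentiation, then recover $v_1=-A^{-1}Sv_2$, summing the per-stage costs to get the displayed bound. Your added attention to the bit-lengths of intermediate integers goes slightly beyond what the paper writes down, but does not change the argument.
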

\begin{proof}
The algorithm has been described above. In summary, we have to transform the original system using the Smith normal form for $N$, compute $v_2$ and the matrices $C_l$ and $C=C_1\ldots C_n$, and then check whether $v_2$ lies in $\Z[{1\over d}]^r$. If it does, we may either compute $t$ such that  $d^tv_2$ is integral or estimate $t$ as $t_0+t_1$ (see Remark \ref{iforv2}). Then we compute 
$$C^{tr\lfloor\log d\rfloor}v_2$$
and check whether it is integral or not.

\
 
To estimate the complexity of this procedure observe that for an integral matrix $N$, the time complexity of computing the Smith normal form $D$ and invertible integral matrices $P$ and $Q$ such that $QNP=D$  is polynomial, specifically $O(s^6\log sa)$, where $a$ is the maximum absolute value of the entries of $N$. For a proof of this fact see \cite{KB} in the non-singular case and \cite{CY} for the singular one. 
Once we have the Smith normal form, to compute $v_2$ we only have to perform the product of $D_2^{-1}$ and $(Qu)_2$ which has complexity $O(r^3)$. Next, we have to compute the matrices $C_l$, which requires $n-1$ matrix multiplications, thus $O((n-1)s^3)$. We then check whether $C^{tr\lfloor\log d\rfloor}v_2$ is integral which takes at most $O((n-1) s^3\log ts\log d)$ time.
Solving for $v_2$ and $v_1'$ via Gaussians elimination take $O(r^3)$ and $O((s-r)^3)$, respectively, and calculating $v_1$ is $O((s-r)^5$. The overall time complexity is then the sum of the above operations. Note that the lower order terms involving $s$ and $r$ are dominated by the complexity of calculating the Smith normal form.
\end{proof}

\section{On the Complexity of the Conjugacy Problem
}\label{complexity}

\subsection{The Conjugacy Search Problem in Split Metabelian Groups}\label{general} We begin with a few considerations about the conjugacy search problem in split metabelian groups not necessarily in $\mathcal{F}$. So for the moment we consider
a group $G$ of the form $G=B\rtimes Q$ with both  $B$ and $Q$ abelian groups. As before, we use multiplicative notation for the whole group $G$ but additive notation for $B$. 

\

Assume that we have conjugate elements $g,g_1\in G$ and we want to solve the conjugacy search problem for $g$, $g_1$, i.e., we want to find an
$h\in G$ such that 
$$g^h=g_1.$$
Let $g=bx$, $g_1=b_1x_1$ and $h=cy$ with $b,b_1,c\in B$, $x,x_1,y\in Q$, then
$$b_1x_1=g_1=g^h=hgh^{-1}=cybxy^{-1}c^{-1}=cb^y(c^{-1})^{x}x$$
thus $x=x_1$ and from now on we denote this element solely by $x$.
The element $cb^y(c^{-1})^{x}$ belongs to the abelian group $B$. We write it additively
$$c-x\cdot c+y\cdot b=y\cdot b+(1-x)\cdot c.$$
This means that the conjugacy search problem above 
is equivalent to the problem of finding $c\in B$, $y\in Q$ such that
\begin{equation}\label{equation}b_1=y\cdot b+(1-x)\cdot c\end{equation}
when $b,b_1\in B$ and $x\in Q$ are given.
This problem can be split into two parts:

\begin{itemize} 
\item[i)] find a $y\in Q$ such that $b_1\equiv y\cdot b$ modulo the subgroup $(1-x)B$ 

\item[ii)] find a $c\in B$ such that $(1-x)\cdot c=b_1-y\cdot b$ where $y$ is the element found in i).
\end{itemize}
The complexity of both problems depends on the groups $Q$ and $B$ and potentially on the choice of $x$, $b$ and $b_1$ as well.

\

We now return to the particular case when our group belongs to $\mathcal{F}$. In this case there is a decomposition $G=B\rtimes Q$ with $Q$ a free abelian group of finite rank and $B$ an additive subgroup of $\Q^s$. We fix the elements $x\in Q$, $b$ and $b_1\in B$. We also use the same notation for $G$ as in the previous section so we assume that $G$ has a presentation as (\ref{F}), that $M_1,\ldots,M_n$ are the matrices encoded in that presentation that yield the action of the $q_1,\ldots,q_n$ and that $d$ is an integer so that $dM_l^{-1}$ is an integer matrix for each $l=1,\ldots,n$.


\

We let $M_x$ be the rational matrix associated with the action of $x$ on $B$, that is, if $x=q_1^{\alpha_1}\ldots q_n^{\alpha_n}$, then
$$M_x=M_1^{\alpha_1}\ldots M_n^{\alpha_n}.$$
For technical reasons it will be useful to have a special notation for the matrix $I-M_x$. So we put  $N_x=I-M_x$.
Consider the $Q$-invariant subgroup of $B$
$$N_xB=(1-x)\cdot B.$$
 There is an induced action of $Q$ on the quotient group $\bar{B}=B/((1-x)\cdot B)=B/N_xB$. We use $\bar{\quad}$ to denote the coset in $\bar{B}$ associated with a given element. With this notation equation in i) above is
$$\bar b_1=y\cdot \bar b.$$

Let $T$ be the torsion subgroup of $\bar B$. This subgroup is also $Q$-invariant so we get an induced $Q$-action on  $\bar B/T$.
The fact that $\bar B/T$ is torsion-free and of finite Pr\"ufer rank implies that it can be embedded in $\Q^{s_1}$ for some $s_1$. Explicitly, as  $\Q$ is flat we get
$$\bar B/T\hookrightarrow \bar B/T\otimes\Q=(B/N_xB)\otimes\Q=(B\otimes\Q)/(N_xB\otimes\Q)=\Q^s/N_x\Q^s$$
and one can find the matrices encoding the action of each of the elements $q_l$  on $\bar{B}/T$.

\

As we will see below, problem i) can be reduced by quotienting out the subgroup $T$ to a multiple orbit problem in a vector space and problem ii) is a type of discrete log problem. For the first we take advantage of the polynomial time solution of the multiple orbit problem in a vector space given in  \cite{aszl1996multiplicative}. The latter can be solved using the fact, proved in the next subsection, that $T$ is finite. Moreover, we will see that one can get an upper bound for the complexity of this algorithm that is essentially dependent upon the size of  $T$. 

\subsection{On the Subgroup $T$} We proceed to showing that $T$ is indeed finite, and calculating a bound for its size. 
Recall that the exponent of a torsion abelian group $T$, denoted $\text{exp}(T)$, is the smallest non-negative integer $k$ such that $kv=0$ for any $v\in T$. (If there is no such integer, then the exponent is infinite). The following lemma is well known, but we include it here for completeness:

\begin{lem}\label{abelianprufer} Let $T$ be a torsion abelian group of finite Pr\"ufer rank $s$. Assume that $k=\text{exp}(T)<\infty$. Then $T$ is finite and
$$|T|\leq k^s.$$ 
\end{lem}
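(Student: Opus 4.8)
The plan is to reduce the statement to the structure theorem for finitely generated abelian groups, after first observing that finite Pr\"ufer rank plus finite exponent forces finite generation. First I would argue that $T$ is finitely generated: since $T$ has finite Pr\"ufer rank $s$, every finitely generated subgroup of $T$ is generated by at most $s$ elements. If $T$ were not finitely generated we could build a strictly increasing chain $T_1 < T_2 < \cdots$ of finitely generated subgroups whose union is $T$; each $T_i$ is a finite abelian group (being finitely generated and torsion) with at most $s$ generators and exponent dividing $k$, hence $|T_i| \le k^s$ by the very bound we are trying to prove at the level of finite groups. This uniform bound on the orders contradicts the strict increase of the chain, so $T$ is in fact equal to one of the $T_i$ and thus finitely generated.

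Once $T$ is known to be a finitely generated torsion abelian group, it is finite, and by the structure theorem $T \cong \Z_{d_1} \oplus \cdots \oplus \Z_{d_r}$ with $d_1 \mid d_2 \mid \cdots \mid d_r$. Here $r \le s$ because the Pr\"ufer rank of such a direct sum is exactly $r$ (take one generator from each cyclic factor), and $d_r = \text{exp}(T) = k$. Each $d_i$ divides $d_r = k$, so $|T| = d_1 \cdots d_r \le k^r \le k^s$, which is the desired inequality.

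The step I expect to be the main (though still modest) obstacle is making the reduction to finite generation clean without circularity: the chain argument above invokes the bound $|T_i| \le k^s$ for finite groups, so one should either prove the finite case first as a standalone fact (which is immediate from the structure theorem applied to a finite abelian group with at most $s$ invariant factors) and then bootstrap to the general case, or simply argue directly that a torsion abelian group of finite Pr\"ufer rank is a direct sum of finitely many cyclic and Pr\"ufer quasicyclic $p$-groups, and that finite exponent kills the quasicyclic summands and bounds the cyclic ones. Either route is short; the only thing to be careful about is the logical ordering so that the finite-group bound is established before it is used on the ascending chain. Everything else — counting generators of a direct sum of cyclic groups, reading off the exponent as the largest invariant factor — is routine.
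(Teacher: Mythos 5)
Your proof is correct, but it takes a different route from the paper's. The paper works with the primary decomposition $T=\bigoplus_{p\mid k}T_p$ and invokes the structure theory of abelian groups of finite rank (Robinson 5.1.2): finite exponent rules out quasicyclic summands $C_{p^\infty}$, so each $T_p$ is a direct sum of at most $s$ cyclic groups whose orders divide the $p$-part of $k$, and the bound follows by multiplying over the primes dividing $k$. You instead first establish the bound for finite abelian groups via the invariant-factor decomposition (at most $s$ factors by the rank hypothesis, each of order dividing $k$), and then bootstrap: since every finitely generated subgroup of $T$ is finite of order at most $k^s$, a strictly increasing chain of such subgroups cannot be infinite, so $T$ itself is finitely generated, hence finite and subject to the same bound. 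You correctly flag and defuse the one real issue, namely that the finite case must be proved before it is used in the chain argument; with that ordering there is no circularity. (A slightly cleaner packaging of the same idea: pick a finitely generated subgroup $H$ of maximal order, note $\langle H,t\rangle=H$ for every $t\in T$, and conclude $T=H$; this avoids the inessential claim that the chain's union is all of $T$, which presumes countability.) Your argument is more elementary and self-contained, needing only the fundamental theorem of finite abelian groups; the paper's is shorter but rests on a cited classification of torsion abelian groups of finite Pr\"ufer rank.
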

\begin{proof} Observe that as $T$ has finite exponent, its $p$-primary component $T_p$ vanishes for all primes $p$ except of possibly those primes dividing $k$. Moreover, $T$ cannot contain quasicyclic groups $C_{p^\infty}$. Then, using \cite[5.1.2]{robinson2004} (see also item 3 in page 85), we see that for any prime $p$ dividing $k$, $T_p$ is a sum of at most $s$ copies of a cyclic group of order at most the $p$-part of $k$. 
As $T=\oplus_{p\mid k}T_p$ we deduce the result.
\end{proof}

\begin{lem}\label{integer} Let $N$ be a square $s\times s$ integer matrix and $T$ the torsion subgroup of the group $\Z^s/N\Z^s$. Then
$$\text{exp}(T)\leq \sqrt{s}a^s$$
with $$a=\text{max}\{|a_{ij}|\mid a_{ij}\text{ entry of }N\}.$$ 
\end{lem}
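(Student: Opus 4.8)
The plan is to read off the structure of $T$ directly from the Smith normal form of $N$ and then reduce the claimed exponent bound to the product bound already established in Lemma \ref{smith1}. Let $D=\text{diag}(k_1,\ldots,k_r,0,\ldots,0)$ be the Smith normal form of $N$, with $0<k_1\mid k_2\mid\cdots\mid k_r$ and $r$ the rank of $N$. The invertible matrices $P,Q\in\text{SL}(s,\Z)$ with $QNP=D$ induce an isomorphism
$$\Z^s/N\Z^s\cong\Big(\bigoplus_{j=1}^r\Z/k_j\Z\Big)\oplus\Z^{s-r}.$$
The summand $\Z^{s-r}$ is torsion-free, so the torsion subgroup is exactly $T\cong\bigoplus_{j=1}^r\Z/k_j\Z$. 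This first step is purely structural and requires only the standard theory of the Smith normal form, which is already in force in this subsection.

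The key observation is that the exponent of $T$ coincides with the \emph{largest} invariant factor $k_r$. By the definition of exponent, $\text{exp}(T)=\text{lcm}(k_1,\ldots,k_r)$, and the divisibility chain $k_1\mid k_2\mid\cdots\mid k_r$ forces this least common multiple to collapse to its top term $k_r$. Since every $k_j$ is a positive integer, one then has the elementary inequality $k_r\leq k_1k_2\cdots k_r$. Applying Lemma \ref{smith1} to the product on the right gives $k_1\cdots k_r\leq\sqrt{s}\,a^s$, and hence
$$\text{exp}(T)=k_r\leq k_1k_2\cdots k_r\leq\sqrt{s}\,a^s,$$
which is exactly the asserted bound.

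There is essentially no analytic obstacle here: the only quantitative ingredient is the Hadamard-type estimate on the determinants of the nonsingular $r\times r$ minors of $N$, and this has already been carried out in the proof of Lemma \ref{smith1}. The points that genuinely need care are bookkeeping ones — verifying that the torsion subgroup picks out precisely the cyclic summands $\Z/k_j\Z$ and none of the free part, and justifying the reduction $\text{lcm}(k_1,\ldots,k_r)=k_r$ using the divisibility chain — both of which are immediate from the structure theory. Thus the entire argument amounts to invoking the Smith normal form, identifying $\text{exp}(T)$ with the top invariant factor $k_r$, bounding $k_r$ by the product $k_1\cdots k_r$, and quoting Lemma \ref{smith1}.
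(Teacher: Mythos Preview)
Your proof is correct and follows exactly the same route as the paper: compute the Smith normal form, identify $\text{exp}(T)$ with the top invariant factor $k_r$, bound $k_r$ by the product $k_1\cdots k_r$, and invoke Lemma~\ref{smith1}. The paper's version is simply terser, omitting the explicit description of $\Z^s/N\Z^s$ and the justification that $\text{lcm}(k_1,\ldots,k_r)=k_r$, both of which you spell out.
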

\begin{proof}  Let  $D=\text{diag}(k_1,\ldots,k_r,0,\ldots,0)$ be the Smith normal form of $N$. Then
$$\text{exp}(T)=k_r\leq k_1,\ldots,k_r$$
so it suffices to apply Lemma \ref{smith1}. 

\end{proof}

\begin{thm}\label{bound}  Let $T$ be the torsion subgroup of the abelian group
$\bar{B}={B/(1-x)\cdot B}$.
Then $T$ is finite and  
$$|T|\leq \sqrt{s}^sd^{\mathcal{L}s^2}(a+1)^{s^2}$$
where $\mathcal{L}$ is the length of the element $x$ as a word in the generators of $Q$, $a$ is the maximum absolute value of an entry in $M_x$, the matrix associated with the action of $x$ on $B$.

\end{thm}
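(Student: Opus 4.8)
The plan is to deduce the bound from the integer-matrix case treated in Lemmas~\ref{abelianprufer}, \ref{integer} and~\ref{smith1}. First, note that $\bar B$ is a quotient of $B$ and $B$ embeds into $\Q^s$, so $\bar B$, and hence its subgroup $T$, has Pr\"ufer rank at most $s$. By Lemma~\ref{abelianprufer} it therefore suffices to show that $T$ has finite exponent with
$$\text{exp}(T)\leq\sqrt{s}\,d^{\mathcal{L}s}(a+1)^s,$$
because then $|T|\leq\text{exp}(T)^s$ gives the assertion.

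To bound the exponent I would clear denominators in $M_x$. Let $e$ be the smallest positive integer for which $eM_x$ is integral. Writing $x=q_1^{\alpha_1}\cdots q_n^{\alpha_n}$ with $\mathcal{L}=\sum_l|\alpha_l|$ and using that each $d_lM_l^{-1}$ is integral, the matrix $\big(\prod_l d_l^{|\alpha_l|}\big)M_x$ is integral, so $e$ divides $\prod_l d_l^{|\alpha_l|}$, which is at most $d^{\mathcal{L}}$; in particular $e\leq d^{\mathcal{L}}$. Then $N':=eN_x=eI-eM_x$ is an integral $s\times s$ matrix all of whose entries have absolute value at most $e(a+1)\leq d^{\mathcal{L}}(a+1)$. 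Let $T'$ be the torsion subgroup of $\Z^s/N'\Z^s$; by Lemma~\ref{integer}, $\text{exp}(T')\leq\sqrt{s}\big(d^{\mathcal{L}}(a+1)\big)^s$.

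The heart of the argument is the claim that $\text{exp}(T)$ divides $\text{exp}(T')$. Let $v\in B$ represent a torsion class of $\bar B$, so $mv=N_xw$ for some integer $m\geq1$ and some $w\in B$. Since $B=\bigcup_{j\geq0}M^{-j}\Z^s$, choose $J$ with $M^Jv,M^Jw\in\Z^s$. Because $M$ commutes with $M_x$ (all the $M_l$ commute pairwise), applying $M^J$ and multiplying by $e$ turns $mv=N_xw$ into $m\,(eM^Jv)=N'(M^Jw)$, so the class of $eM^Jv$ in $\Z^s/N'\Z^s$ is torsion, i.e.\ lies in $T'$. Hence any $k$ annihilating $T'$ satisfies $k\,eM^Jv\in N'\Z^s$, whence $kM^Jv\in N_x\Z^s$ and therefore $kv\in M^{-J}N_x\Z^s=N_xM^{-J}\Z^s\subseteq N_xB$. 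Taking $k=\text{exp}(T')$ shows that $k$ kills every torsion class of $\bar B$, so $T$ has finite exponent and $\text{exp}(T)\leq\text{exp}(T')$.

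Putting the pieces together, $\text{exp}(T)\leq\text{exp}(T')\leq\sqrt{s}\,d^{\mathcal{L}s}(a+1)^s$, and Lemma~\ref{abelianprufer} then gives $|T|\leq\text{exp}(T)^s\leq\sqrt{s}^s\,d^{\mathcal{L}s^2}(a+1)^{s^2}$. The one nontrivial point is the reduction in the third paragraph: one has to pass from the module $B$, which is not finitely generated, to the lattice $\Z^s$, and the device that makes this possible is precisely that $M$ commutes with $N_x$, so a torsion witness for a class in $\bar B$ can be pushed into $\Z^s$ without leaving the image of $N_x$.
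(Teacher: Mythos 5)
Your proof is correct and follows essentially the same route as the paper's: clear denominators to get an integral matrix, use the commutativity of $M$ with $N_x$ to push a torsion witness into $\Z^s$, bound the exponent there via the Smith normal form (Lemma~\ref{integer}), and finish with the Pr\"ufer rank bound of Lemma~\ref{abelianprufer}. The only (harmless) difference is cosmetic: you merge the paper's two-stage reduction (integral case first, then multiplication by $d^{\mathcal L}$) into a single divisibility argument with $e N_x$, and you show directly that $\exp(T')$ annihilates $T$ rather than arguing via a gcd and minimality of the order $m$.
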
 
\begin{proof} Let $N_x=I-M_x$.  Assume first  that $M_x$ is an integral matrix, so the same happens with $N_x$. We want to  relate the exponent of $T$ with the exponent of the torsion subgroup of $\Z^s/N_x\Z^s$. Let $k$ be this last exponent and choose $\beta\in B$ such that $1\neq\bar \beta$ lies in $T$. Denote by $m>0$ the order of $\bar \beta$. Observe that $m\beta=N_x\gamma$ for some $\gamma\in B$ and that $m$ is the smallest possible under these conditions. 

\

Next, choose $q\in Q$ such that $q\cdot \beta$ and $q\cdot \gamma$ both lie in $\Z^s$. To find such a $q$ it suffices to
 write $\beta$ and $\gamma$ multiplicatively as in (\ref{form}) and take as $q$ a product of the $q_l$'s with big enough exponents.

\

Then we have $m(q\cdot \beta)=q\cdot N_x\gamma=N_x(q\cdot \gamma)\in N_x\Z^s$ thus $q\cdot b\beta+N_x\Z^s$ lies in the torsion subgroup of $\Z^s/N_x\Z^s$. Therefore, $k(q\cdot \beta)\in N_x\Z^s$. Now, let $m_1$ be the greatest common divisor of $m$ and $k$ and observe that the previous equations imply $m_1(q\cdot \beta)\in N_x\Z^s$. This means that for some $\gamma_1\in\Z^s$ we have
$m_1(q\cdot \beta)=N_x\gamma_1$, thus
$$m_1\beta=q^{-1}N_x\gamma_1=N_xq^{-1}\gamma_1=N_x\gamma_2$$
with $\gamma_2=q^{-1}\cdot \gamma_1\in B$. By the minimality of $m$ we must have $m\leq m_1$. As $m_1$ divides both $k$ and $m$ we can conclude $m=m_1\mid k$. This implies that $k$ is also the exponent of $T$. 

\
 
Next, we consider the general case when $N_x$ could be non-integral. 
As $M_x$ is the product of $\mathcal{L}$ matrices in the set $\{M_1^{\pm1},\ldots,M_n^{\pm}\}$ we see that the matrix $d^{\mathcal{L}}M_x$ is integral and therefore so is $d^{\mathcal{L}}N_x$. 
Obviously, the group $N_xB/d^{\mathcal{L}}N_xB$ is torsion thus
$$\text{exp}(T)\leq\text{exp}(\text{torsion subgroup of }B/d^\mathcal{L}N_xB).$$
The matrix $d^{\mathcal{L}}N_x$ also commutes with the $Q$-action so what we did above implies that this last exponent equals the exponent of the torsion subgroup of $\Z^s/d^{\mathcal{L}}N_x\Z^s$. From all this together with Lemma \ref{integer} and using that the biggest absolute value of an entry of $d^\mathcal{L}$ is bounded by $d^\mathcal{L}N_x$ we get
$$\text{exp}(T)\leq \sqrt{s}d^{{\mathcal{L}}s}(a+1)^s.$$
 
Finally, as the group $\bar B$ has finite Pr\"ufer rank, so does $T$, therefore by Lemma \ref{abelianprufer} we get the result. 
 \end{proof}

 \

\begin{rem}{\rm The maximum absolute value of an entry in the matrix $M_x$ is bounded exponentially on $\mathcal{L}$. Therefore, its logarithm is bounded linearly on $\mathcal{L}$. To see it, observe first that 
if $M_1$ and $M_2$ are $s\times s$ matrices and $\mu$ is an upper bound for the absolute value of the entries of both $M_1$ and $M_2$, then the maximum absolute value of an entry in the product $M_1M_2$ is bounded by
$s\mu^2.$ Repeating this argument one sees that if $x$ has length $\mathcal{L}$ as a word in $q_1,\ldots,q_n$ and $\mu$ is an upper bound for the absolute value of the entries of each $M_l$, then the maximum absolute value $a$ of an entry of $M_x$ is bounded by
$$s^{\mathcal{L}-1}\mu^\mathcal{L}$$
}
\end{rem}

\

The next result yields a bound on the order of $T$ which is exponential in the length $\mathcal{L}$ of $x$.

\begin{prop}\label{Texponential} With the previous notation, there is a constant $K$, depending on $G$ only such that for $T$ the torsion subgroup of $\bar{B}=B/N_xB$, 
$$|T|\leq K^\mathcal{L}$$
where $\mathcal{L}$ is the length of $x$.
\end{prop}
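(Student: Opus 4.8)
The plan is to deduce this directly from Theorem \ref{bound} together with the remark following it. That theorem gives $|T|\le \sqrt{s}^{\,s}\,d^{\mathcal{L}s^2}\,(a+1)^{s^2}$, where $a$ is the largest absolute value of an entry of $M_x$, and the remark records that $a\le s^{\mathcal{L}-1}\mu^{\mathcal{L}}$, with $\mu$ an upper bound for the entry sizes of the fixed matrices $M_1,\dots,M_n$. Since $s$, $d$, and $\mu$ are all determined by $G$ alone (and not by the element $x$), the only remaining work is to repackage this estimate into the form $K^{\mathcal{L}}$.

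First I would handle $\mathcal{L}=0$ separately: then $x=1$, so $N_x=0$ and $\bar B=B$ is torsion-free, hence $T$ is trivial and the inequality holds for any $K\ge 1$. For $\mathcal{L}\ge 1$, I would substitute $a\le s^{\mathcal{L}-1}\mu^{\mathcal{L}}\le (s\mu)^{\mathcal{L}}$ into the bound of Theorem \ref{bound}; using $s,\mu\ge 1$ one gets $(a+1)^{s^2}\le 2^{s^2}(s\mu)^{\mathcal{L}s^2}$, whence
$$|T|\le \sqrt{s}^{\,s}\,2^{s^2}\,(d s \mu)^{\mathcal{L}s^2}.$$
Finally, since $\mathcal{L}\ge 1$, the $\mathcal{L}$-independent factor $\sqrt{s}^{\,s}2^{s^2}\ge 1$ is at most $\bigl(\sqrt{s}^{\,s}2^{s^2}\bigr)^{\mathcal{L}}$, so taking
$$K:=\sqrt{s}^{\,s}\,2^{s^2}\,(d s \mu)^{s^2}$$
yields $|T|\le K^{\mathcal{L}}$ with $K$ depending only on $G$.

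There is no genuine obstacle here beyond bookkeeping. The one point that needs care is confirming that every quantity feeding the bound ($s$, $d$, and the entry bound $\mu$ of the defining matrices) is an invariant of $G$ rather than of the chosen $x$, so that absorbing the constant prefactor $\sqrt{s}^{\,s}2^{s^2}$ into $K^{\mathcal{L}}$ — legitimate precisely because $\mathcal{L}\ge 1$ — really does produce a constant $K$ of the asserted kind, with the degenerate case $\mathcal{L}=0$ treated by hand. No new idea beyond Theorem \ref{bound} and its accompanying remark is required.
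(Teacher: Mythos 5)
Your argument is correct and is essentially the same as the paper's: both apply Theorem \ref{bound} together with the remark that $a\leq s^{\mathcal{L}-1}\mu^{\mathcal{L}}$, and then absorb the $\mathcal{L}$-independent prefactor into $K^{\mathcal{L}}$ (the paper packages the constant as $K=(\sqrt{s}ds\mu+\sqrt{s}d)^{s^2}$, you as $\sqrt{s}^{\,s}2^{s^2}(ds\mu)^{s^2}$, an immaterial difference). Your explicit treatment of the degenerate case $\mathcal{L}=0$ is a small tidiness the paper omits, but no new idea is involved.
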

\begin{proof}  Let $\mu$ be an upper bound for the absolute value of the entries of each $M_l$. By Theorem \ref{bound} and the observation above
$$|T|\leq \sqrt{s}^sd^{\mathcal{L}s^2}(a+1)^{s^2}\leq \sqrt{s}^sd^{\mathcal{L}s^2}(s^{\mathcal{L}-1}\mu^\mathcal{L}+1)^{s^2}\leq(\sqrt{s}ds\mu+\sqrt{s}d)^{s^2\mathcal{L}}$$
so we only have to take $K=(\sqrt{s}ds\mu+\sqrt{s}d)^{s^2}$.
\end{proof}

\

\

\subsection{Description of the Algorithm}\label{algorithm}

Recall that to find an element that conjugates $bx$ into $b_1x$ (here, $x\in Q$, $b,b_1\in B$) we need to find:
\begin{itemize}
\item{i)} $y\in Q$ such that $\bar{b_1}=y\cdot\bar{b}$ where $\bar{}$ means passing to $\bar{B}=B/N_xB$,

\item{ii)} $c\in B$ such that $N_xc=b_1-y\cdot b$.
\end{itemize}

\

\noindent{\bf Step 1 (Problem i):} With $M_x$ and $N_x$ as before, form the quotient $V=\Q^s/N_x\Q^s$ and find the matrices encoding the action of each $q_l$ on $V$.
Consider the projections $\bar b+T$ and $\bar b_1+T$ of $b$ and $b_1$ in $\bar B/T$ and see them as elements in $V$ (via the embedding $\bar B/T\hookrightarrow V$). Then use the algorithm in  \cite{aszl1996multiplicative} to solve the multiple orbit problem
$$y\cdot (\bar b+T)=\bar b_1+T.$$
This algorithm determines not only a single $y$ but the full lattice of solutions.
$$\Lambda=\{q\in Q\mid q\cdot\bar b-\bar b_1\in T\},$$
Furthermore, it allows one to compute a basis $y_1,\ldots,y_m$ of the following subgroup of $Q$
$$Q_1=C_Q(\bar b+T)=\{q\in Q\mid q\cdot \bar b-\bar b\in T\}.$$ 
(Note that for any $z\in\Lambda$,  $Q_1=z^{-1}\Lambda$).

\

\noindent{\bf Step 2 (Problem ii):} Order the elements of $Q_1$ according to word length. For each $q\in Q_1$ check whether $q\cdot b-b_1\in N_xB$. Each check consists of trying to solve a system of linear equations. More precisely, we have to check whether the system
$$u= N_xX$$
with $u=q\cdot b-b_1$ has some solution in $B$. This can be done using Proposition \ref{solutionsystem}. When found, the solution yields the element $c$ of $B$ that we needed.

\

Of course, a priori this procedure might never halt. But as we shall see next this is not the case, for the number of iterations in Step 2 is bounded by the size of the group $T$, which has been shown to be finite. 

\

We can now be more explicit. Recall that the problem is to find a $y\in Q$ such that $y\cdot \bar b=\bar b_1$, and, as this is the search variant of the conjugacy, $y$ exists. We will use the notation above so we know that all the solutions $y$ lie in the set
$$\Lambda=\{q\in Q\mid q\cdot\bar b-\bar b_1\in T\}.$$
Again let $Q_1=C_Q(\bar b+T)=\{q\in Q\mid q\cdot \bar b-\bar b\in T\}$.  Let  $z\in\Lambda$ be an arbitrary element that we will assume fixed from now.  Then we have  $\Lambda=zQ_1$ thus
 for any $q\in Q_1$, the element $zq\cdot\bar b-\bar b_1$ lies in $T$ and as $T$ is finite there are only finitely many possibilities for its value. Moreover, we know that eventually it takes the value 0.
Put
$$Q_2=C_Q(\bar b)=\{q\in Q\mid q\cdot \bar b=\bar b\}=\{q\in Q\mid q\cdot b- b\in N_xB\}.$$
We obviously have $Q_2\leq Q_1$ and for $q_1,q_2\in Q_1$,
$$zq_1\cdot\bar b-\bar b_1=zq_2\cdot\bar b-\bar b_1\text{ if and only if }q_1Q_2=q_2Q_2.$$ As $T$ is finite we conclude that
the quotient $Q_1/Q_2$ is of finite order bounded by $|T|$. If $\{y_1\ldots,y_{|T|}\}$ is a set of representatives of the cosets of $Q_2$ in $Q_1$, then some element $y$ in the finite set
$$\{zy_1,\ldots,zy_{|T|}\}$$ 
is the $y\in Q$ that satisfies $y\cdot \bar b=\bar b_1$.

\

In the next lemma we prove that by $Q_1$ being a lattice we can produce a full set of representatives as before, including our $y$, by taking elements solely from $Q_1$, Moreover, the number of steps needed is bounded in terms of $|T|$.

\begin{lem} Let $Q_2\leq Q_1$ with $Q_1$ free abelian with generators $x_1,\ldots,x_m$, and assume that the group $Q_1/Q_2$ is finite. Then the set
$$\Omega=\{x_1^{\alpha_1}\ldots x_m^{\alpha_m}\mid \sum_{j=1}^m|\alpha_j|< |Q_1/Q_2|\}$$
has order bounded by $(2 |Q_1/Q_2|)^{m}$ and contains a full set of representatives of the cosets of $Q_2$ in $Q_1$. \end{lem}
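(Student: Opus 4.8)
The plan is to treat the two claims separately: the cardinality estimate is immediate, and the assertion that $\Omega$ meets every coset of $Q_2$ in $Q_1$ is the substantive point. Write $N=|Q_1/Q_2|$. For the size bound, observe that if $x_1^{\alpha_1}\cdots x_m^{\alpha_m}\in\Omega$ then each $|\alpha_j|\le N-1$, so each exponent ranges over at most $2N-1$ values; hence $|\Omega|\le (2N-1)^m\le (2N)^m=(2|Q_1/Q_2|)^m$. That disposes of the first half.

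For the covering statement I would fix a coset $\mathcal{C}$ of $Q_2$ in $Q_1$ and pick $w=x_1^{\alpha_1}\cdots x_m^{\alpha_m}\in\mathcal{C}$ minimizing the weight $S:=\sum_{j=1}^m|\alpha_j|$ over all elements of $\mathcal{C}$. Replacing each $x_j$ by $x_j^{-1}$ whenever $\alpha_j<0$ leaves $\Omega$, the partition into cosets, and the weight function all unchanged, so we may assume $\alpha_j\ge 0$ for every $j$. The goal is then to prove $S\le N-1$: this gives $w\in\Omega\cap\mathcal{C}$, and choosing one such $w$ for each coset yields a transversal contained in $\Omega$, which is exactly what is asserted.

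So suppose $S\ge N$ and derive a contradiction. I would use the \emph{monotone} path from $e$ to $w$, namely the partial products $w_0=e,\ w_1=x_1,\ \ldots,\ w_{\alpha_1}=x_1^{\alpha_1},\ w_{\alpha_1+1}=x_1^{\alpha_1}x_2,\ \ldots,\ w_S=w$. Writing $w_i=x_1^{c_1^{(i)}}\cdots x_m^{c_m^{(i)}}$, one has $0\le c_j^{(i)}\le\alpha_j$, $\sum_j c_j^{(i)}=i$, and each $c_j^{(i)}$ is non-decreasing in $i$ because passing from $w_i$ to $w_{i+1}$ raises exactly one coordinate by one. Since $S+1\ge N+1>|Q_1/Q_2|$, pigeonhole gives $a<b$ with $w_aQ_2=w_bQ_2$, so $t:=w_a^{-1}w_b\in Q_2$; as $Q_1$ is abelian, $t=x_1^{\gamma_1}\cdots x_m^{\gamma_m}$ with $\gamma_j=c_j^{(b)}-c_j^{(a)}$, and by monotonicity $0\le\gamma_j\le\alpha_j$ while $\sum_j\gamma_j=b-a>0$. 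Then $w':=wt^{-1}=x_1^{\alpha_1-\gamma_1}\cdots x_m^{\alpha_m-\gamma_m}$ still lies in $\mathcal{C}$ but has weight $\sum_j(\alpha_j-\gamma_j)=S-(b-a)<S$, contradicting minimality.

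The step I expect to be the main obstacle is precisely the choice of path and the resulting sign control: for a general geodesic from $e$ to $w$ the pigeonhole-produced relator $t$ could have exponents of the wrong sign relative to $w$, so $wt^{-1}$ need not be shorter and no contradiction follows. Insisting on the monotone path is what forces $0\le\gamma_j\le\alpha_j$, and hence genuine cancellation in every coordinate. Once that is in place, everything else—the pigeonhole count, the abelian rewriting of $t$, and the weight arithmetic—is routine bookkeeping.
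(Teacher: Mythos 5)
Your proof is correct, and it takes a genuinely different route from the paper's. The paper argues geometrically: it takes generators $v_1,\ldots,v_m$ of $Q_2$ viewed in $\Z^m$, forms the half-open fundamental parallelepiped $P=\{t_1v_1+\ldots+t_mv_m\mid 0\leq t_j<1\}$, notes that $\Z^m\cap P$ is a transversal, and then claims that every lattice point of $P$ is joined to the origin by a simple lattice path staying inside $\Z^m\cap P$; since such a path has length at most $|\Z^m\cap P|=|Q_1/Q_2|$ and at least the $\ell^1$-norm $\sum_j|\alpha_j|$, the bound follows. Your argument replaces this with a minimal-weight coset representative plus a pigeonhole on the $S+1$ prefixes of a monotone word, followed by an exchange step $w\mapsto wt^{-1}$ that strictly decreases the weight --- essentially the standard proof that a finite-index subgroup admits a transversal of word length less than the index. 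What your version buys is rigor and self-containment: the paper's connectivity claim (that $\Z^m\cap P$ is lattice-path-connected within $P$) is asserted without justification and is not obvious, whereas your monotonicity condition $0\leq\gamma_j\leq\alpha_j$ is exactly what makes the cancellation work and is proved on the spot; you also cleanly obtain the strict bound $S\leq|Q_1/Q_2|-1$ needed to land in $\Omega$, while the paper's displayed conclusion is only the non-strict inequality. What the paper's version buys is brevity and a picture (the transversal is literally the set of lattice points in a fundamental domain), at the cost of the unproven connectivity step.
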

\begin{proof} Let $v_1,\ldots,v_m$ be generators of the subgroup $Q_2$, which can be viewed as points in $\Z^m$. Consider the parallelogram
$$P=\{t_1v_1+\ldots+t_mv_m\mid t_j\in\R, 0\leq t_j<1\}.$$
Then $\Z^m\cap P$ is a set of representatives of the cosets of $Q_2$ in $Q_1$ and we claim that $P\subseteq\Omega$. Observe that for any point $p=(\alpha_1,\ldots,\alpha_m)$ in $\Z^m\cap P$ there is a path in $\Z^m\cap P$ from $(0,\ldots,0)$ to $p$. We may assume that the path is simple and therefore its length is bounded by $ |Q_1/Q_2|$. On the other hand, the length of the path is greater than or equal to $\sum_{j=1}^m|\alpha_j|$ thus
$$\sum_{j=1}^m|\alpha_j|\leq  |Q_1/Q_2|.$$
\end{proof}

The number of iterations of Step 2 is bounded by the value $|Q_1/Q_2|$.
At this point, it is clear that smaller groups $Q_1/Q_2$ will reduce the running time of the algorithm. Observe that by construction, the element $x$ belongs to the group $Q_2$. In the case when $Q$ is cyclic this yields a dramatic improvement of our bound for $|Q_1/Q_2|$: we only have one generator, say $q_1$ of $Q$, thus, if $x=q_1^\mathcal{L}$, $|Q_1/Q_2|\leq  |Q/Q_2|=\mathcal{L}$. Moreover, in this case Step 1 in our algorithm is not needed, so we only have to perform $\mathcal{L}$ iterations of Step 2, and our algorithm coincides with the one in \cite{cavallo2014polynomial}.
\goodbreak


\subsection{Complexity Analysis and Consequences} We can now prove Theorem \ref{exponential}:

\noindent{{\sl Proof of Theorem \ref{exponential}}.
We consider the complexity of the algorithm of Subsection \ref{algorithm}. We assume that $g$ and $g_1$ are given as words as in (\ref{form}).
Observe that Step 1 only requires polynomial time. As for Step 2, we have to consider an exponential (in $\mathcal{L}$) number of systems of linear equations of the form
$$u= N_xX$$
with $u=q\cdot b-b_1$. Moreover, we may find (by writing $u$ as in (\ref{form})) some $q\in Q$ such that $q\cdot u$ is in the group generated by $b_1\ldots,b_s$. If $R$ is the matrix representing the action of $q$, this is equivalent to the vector $Ru$ being integral. As $R$ and $N_x$ commute our system can be transformed into
$$N_xRX=Ru.$$
Obviously, $X$ lies in $B$ if and only if $RX$ does, thus the problem is equivalent to deciding whether 
$$d^\mathcal{L}N_xX_1=d^\mathcal{L}Ru$$
has some solution $X_1$ in $B$.

\

Using Proposition \ref{solutionsystem} and the complexity computation of Proposition \ref{complexitysolutionsystem} we see that this can be done in a time that is polynomial on log of the maximum absolute value of an entry in $d^\mathcal{L}N_x$. Observe that our integrality assumption on $Ru$ implies that the integer denoted $t_0$ in Proposition \ref{complexitysolutionsystem} can be taken to be 0. As  the maximum absolute value of an entry in $d^\mathcal{L}N_x$ is exponential on $\mathcal{L}$, this time is polynomial on $\mathcal{L}$. The exponential bound in the result then follows because we are doing this a number of times which is exponential on $\mathcal{L}$.
\qed

\

Next, we consider a particular case in which the running time of the algorithm is reduced to polynomial with respect to the length $\mathcal{L}$ of $x$.

Let $s_1,s_2\geq 0$ be integers with $s=s_1+s_2$ and denote

$$\Gamma_{s_1,s_2}:=  \left\{ \text{Matrices }  
\begin{scriptsize}
  \begin{pmatrix}
 I_{s_1} &   A  \\
      0 & I_{s_2} \\
  \end{pmatrix} 
  \end{scriptsize}
 \right\}\leq SL(s,\Z).$$
  Observe that if we consider a group $G$ with a presentation as in (\ref{F}) such that the matrices $M_l$ lie in $\Gamma_{s_1,s_2}$, 
  then each $M_l^{-1}$ is also an integral matrix so we can choose $d=1$, where $d$ is the integer of Theorem \ref{bound}. This implies that this group is polycyclic. Moreover, it is easy to see that it is in fact nilpotent.

\begin{prop} With the previous notation, assume that for $l=1,\ldots,n$,
$$M_l\in\Gamma_{s_1,s_2}.$$ 
Then there is some constant $K$ depending on $G$ only such that for $T$, the torsion subgroup of $B/N_xB=(1-M_x)B$, 
$$|T|\leq K\mathcal{L}^{s^2}$$
where $\mathcal{L}$ is the length of $x$.
\end{prop}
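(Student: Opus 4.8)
The plan is to exploit the very rigid shape of the matrices $M_l$. Each lies in $\Gamma_{s_1,s_2}$, so $M_l=I+A_l$ where $A_l$ is supported only in the top-right $s_1\times s_2$ block. The key observation, which I would record first, is that this two-block structure forces $A_lA_j=0$ for \emph{all} pairs $l,j$ (the product of any two matrices with this support is zero), and in particular $A_l^2=0$, so $M_l^{\alpha}=I+\alpha A_l$ for every $\alpha\in\Z$. Writing $x$ as a word of length $\mathcal{L}$ in $q_1^{\pm1},\dots,q_n^{\pm1}$ and expanding the corresponding product $M_x$ of the matrices $M_l^{\pm1}=I\pm A_l$, every term containing two or more factors among the $A_l$ vanishes, so $M_x=I+A$ with $A$ a signed sum of at most $\mathcal{L}$ of the $A_l$. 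Hence $N_x=I-M_x=-A$ is an \emph{integral} matrix whose entries are bounded in absolute value by $\mathcal{L}\mu$, where $\mu$ is an upper bound, depending only on $G$, for the entries of the $M_l$.

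Since the $M_l$ have integral inverses we may take $d=1$; as observed in Section \ref{groups} this gives $B=\Z^s$, so $T$ is exactly the torsion subgroup of $\Z^s/N_x\Z^s$. Now Lemma \ref{integer} applies to the integral matrix $N_x$ and yields $\text{exp}(T)\le\sqrt{s}\,(\mathcal{L}\mu)^s$. Finally $\bar B$, hence also $T$, has Pr\"ufer rank at most $s$ (it is a quotient of $\Z^s$), so Lemma \ref{abelianprufer} converts this into $|T|\le\text{exp}(T)^s\le s^{s/2}\mu^{s^2}\,\mathcal{L}^{s^2}$, and the proposition holds with $K=s^{s/2}\mu^{s^2}$.

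The one genuinely new ingredient — and therefore the point to get right, even though it is short — is the \emph{linear} (rather than exponential) growth in $\mathcal{L}$ of the entries of $N_x$; in the general polycyclic setting of Theorem \ref{bound} the entries of $M_x$, and hence of $N_x$, grow exponentially in $\mathcal{L}$, which is precisely why the general bound on $|T|$ is exponential. The identity $M_x=I+(\text{a signed sum of }\mathcal{L}\text{ of the }A_l)$ is what collapses that exponential to a polynomial, and once it is established the remainder is a routine invocation of Lemmas \ref{integer} and \ref{abelianprufer}. The only thing to double-check carefully is that $A_lA_j=0$ holds for distinct $l,j$ as well as for $l=j$, which is immediate from block multiplication because $s=s_1+s_2$; one should also note that taking a word of minimal length for $x$ (rather than the normal form $q_1^{\alpha_1}\cdots q_n^{\alpha_n}$) does not cost anything in the estimate.
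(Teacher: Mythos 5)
Your proof is correct and follows essentially the same route as the paper: the paper also reduces to the case $d=1$ and bounds the entries of $M_x$ by $\mathcal{L}\mu$ using the fact that multiplying matrices in $\Gamma_{s_1,s_2}$ adds their top-right blocks (your $A_lA_j=0$ observation is just this in additive form), then applies the torsion bound. The only cosmetic difference is that you re-derive the integral case of Theorem \ref{bound} directly from Lemmas \ref{integer} and \ref{abelianprufer} rather than citing the bound $|T|\leq\sqrt{s}(a+1)^{s^2}$ with $a\leq\mathcal{L}\mu$, which yields the same constant up to unimportant factors.
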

\begin{proof} We consider the bound of Theorem \ref{bound} for $d=1$ (see above) 
$$|T|\leq \sqrt{s}(a+1)^{s^2},$$
where $a$ is the maximum absolute value of an entry in $M_x$. Observe that $M_x$ is a product of matrices in $\Gamma_{s_1,s_2}$ and that
$$ \begin{pmatrix}
 I_{s_1} &   A_1  \\
      0 & I_{s_2} \\
  \end{pmatrix} 
 \begin{pmatrix}
 I_{s_1} &   A_2  \\
      0 & I_{s_2} \\
  \end{pmatrix} =
   \begin{pmatrix}
 I_{s_1} &   A_1+A_2  \\
      0 & I_{s_2} \\
  \end{pmatrix} .
$$
Therefore, if we let $\mu$ be the maximum absolute value of an entry in each of the matrices $A_1,\ldots,A_n$, then
$a\leq \mathcal{L}\mu$ and therefore
$$|T|\leq \sqrt{s}(a+1)^{s^2}\leq \sqrt{s}(\mathcal{L}\mu+1)^{s^2}\leq  \sqrt{s}(2\mathcal{L}\mu)^{s^2}$$
so it suffices to take $K=\sqrt{s}(2\mu)^{s^2}$.
\end{proof}

This result together with the algorithm above (recall that $d=1$ in this case) imply the following:

\begin{thm}\label{polynomial}   With the previous notation, assume that for $l=1,\ldots,n$,
$$M_l\in\Gamma_{s_1,s_2}.$$ 
then the complexity of the conjugacy problem in $G$ is at most polynomial. 
\end{thm}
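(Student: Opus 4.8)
The plan is to run the algorithm of Subsection~\ref{algorithm} on the conjugate pair $g=bx$, $g_1=b_1x$ and to revisit the complexity bookkeeping of the proof of Theorem~\ref{exponential}, checking that, under the hypothesis $M_l\in\Gamma_{s_1,s_2}$, every quantity that was exponential in $\mathcal{L}$ collapses to a polynomial one. Two structural facts do the work. First, as observed just before the preceding proposition, each $M_l^{-1}$ is already integral, so we may take $d=1$; hence $B=\Z^s$ and all of the $d^{\mathcal{L}}$ factors occurring in Theorem~\ref{exponential}, Proposition~\ref{solutionsystem} and Proposition~\ref{complexitysolutionsystem} simply disappear. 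Second, $\Gamma_{s_1,s_2}$ is closed under products and inverses, the off-diagonal block adding under a product and negating under an inverse; consequently, if $q\in Q$ has word length $\ell$ in the $q_l$, then $M_q\in\Gamma_{s_1,s_2}$ and its entries are bounded by $1+\ell\mu$, where $\mu$ bounds the absolute values of the entries of the matrices $A_1,\dots,A_n$. Thus word length produces only \emph{linear} growth of matrix entries, not the exponential growth that forces the bound of Theorem~\ref{exponential} in general.

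First I would dispatch Step~1. The multiple orbit problem $y\cdot(\bar b+T)=\bar b_1+T$ in $V=\Q^s/N_x\Q^s$ is solved in polynomial time by the algorithm of \cite{aszl1996multiplicative}, which also returns the matrices encoding the $q_l$-action on $V$, a basis $y_1,\dots,y_m$ of $Q_1=C_Q(\bar b+T)$, and an element $z\in\Lambda$; the hypothesis changes nothing here, since $N_x=I-M_x$ already has entries bounded by $1+\mathcal{L}\mu$. Next I would bound the number of linear systems inspected in Step~2. By the Lemma of Subsection~\ref{algorithm}, a full set of representatives of $Q_2$ in $Q_1$ is contained in the set $\Omega$, of cardinality at most $(2|Q_1/Q_2|)^m$; and $|Q_1/Q_2|\le|T|\le K\mathcal{L}^{s^2}$ by the preceding proposition. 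Since $m\le n$ and $K,s$ depend on $G$ only, this count is polynomial in $\mathcal{L}$, and the elements $q$ enumerated in the process have word length polynomial in $\mathcal{L}$ and in the input length.

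Then I would bound the cost of each individual test of Step~2, namely deciding whether $N_xX=u$ with $u=q\cdot b-b_1$ has a solution in $B$. Because all the action matrices involved have polynomially bounded entries, the entries of $u$ are polynomial in $\mathcal{L}$ and in the lengths of $b$ and $b_1$; since $B=\Z^s$ we have $u\in\Z^s$, so the integer $t_0$ of Proposition~\ref{complexitysolutionsystem} may be taken to be $0$ and, by Remark~\ref{iforv2}, the exponent $t$ of Proposition~\ref{solutionsystem} is also polynomial in $\mathcal{L}$; indeed, with $d=1$ the test reduces to checking integrality of the vector $v_2$ read off from the Smith normal form of $N_x$. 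Proposition~\ref{complexitysolutionsystem} then bounds the cost of each such test by a quantity polynomial in $\mathcal{L}$ and the input length, its dominant term being $O(s^6\log sa)$ with $\log a=O(\log\mathcal{L})$. Multiplying the polynomially many tests by this polynomial per-test cost and adding the polynomial cost of Step~1 gives a total time polynomial in the length of $g$ and $g_1$, which is the assertion of the theorem.

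I expect the only real obstacle to be the careful verification that every piece of intermediate data — the entries of $M_x$ and $N_x$, the action matrices $M_q$ for the enumerated $q$, the unimodular transforms $P,Q$ and the blocks $C_l$ coming from the Smith normal form, and the vectors $u$ — remains polynomially bounded in $\mathcal{L}$ and in the input length throughout. This is precisely where the two structural observations are used: membership in $\Gamma_{s_1,s_2}$ turns the would-be exponential factor $\mu^{\mathcal{L}}$ into the linear factor $\ell\mu$, and $d=1$ eliminates the genuinely exponential factor $d^{\mathcal{L}s^2}$ of Theorem~\ref{bound}; together they collapse the exponential estimate of Theorem~\ref{exponential} into the polynomial one.
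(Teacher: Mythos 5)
Your proposal is correct and follows essentially the same route as the paper, which derives Theorem~\ref{polynomial} directly from the preceding proposition (the bound $|T|\le K\mathcal{L}^{s^2}$, hence polynomially many iterations of Step~2) together with the observation that $d=1$ and the per-iteration polynomial cost already established in the proof of Theorem~\ref{exponential}. You simply spell out the bookkeeping that the paper leaves implicit.
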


We finish this section with a remark on conjugator lengths. Let $g$ and $g_1$ be conjugate elements in $G$. Our algorithm primarily consists of identifying a suitable subgroup $Q_1$ of $Q$ and showing that, for a function dependent upon the length $\mathcal{L}$ of $x$, there exists some $y\in Q_1$ whose length is bounded by that function and which is the $Q$-component of an element $h$ such that $g^h=g_1$. Essentially, we are providing an estimation for the $Q$-conjugator length function.
We make this more precise in the next result.

\begin{cor}\label{length} There exists an integer $K>0$ dependent upon $G$ only such that for any conjugate elements $g,g_1\in G$,  with $g=bx$, $g_1=b_1x$ for $x\in Q$ and $b,b_1\in B$, there is some $h=cy$ for $c\in B$, $y\in Q$ and $g^h=g_1$ such that 
the length of $y$ is bounded by $K^\mathcal{L}$, where $\mathcal{L}$ is the length of $x$. In the particular case when $Q\leq\Gamma_{s_1+s_2}$, 
 the length of $y$ is bounded by $K\mathcal{L}^{s^2}$.
\end{cor}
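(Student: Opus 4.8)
The plan is to extract the conjugator length bound directly from the analysis of the algorithm in Subsection \ref{algorithm} together with the bounds on $|T|$ already established. Recall that the algorithm produces a conjugator $h = cy$ by first solving problem i) for $y$ and then solving problem ii) for $c$; the content of the corollary is entirely about the length of the $Q$-component $y$, so I would concentrate on that and leave $c$ unconstrained.

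First I would recall the structure established earlier: every solution $y$ to $y\cdot\bar b = \bar b_1$ lies in $\Lambda = zQ_1$ for a fixed $z\in\Lambda$, and two elements $zq_1, zq_2$ with $q_i\in Q_1$ give the same value $zq_i\cdot\bar b - \bar b_1 \in T$ precisely when $q_1 Q_2 = q_2 Q_2$, where $Q_2 = C_Q(\bar b) \leq Q_1$. Hence $|Q_1/Q_2| \leq |T|$, and by the lemma preceding this subsection, a full set of coset representatives for $Q_2$ in $Q_1$ can be found inside the box $\Omega = \{x_1^{\alpha_1}\cdots x_m^{\alpha_m} \mid \sum_j|\alpha_j| < |Q_1/Q_2|\}$, where $x_1,\ldots,x_m$ generate $Q_1$. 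Therefore some valid $y$ has the form $z q$ with $q\in\Omega$, so the length of $y$ is bounded by (the length of $z$) plus a quantity on the order of $|T|$ times a constant depending on the fixed generating set of $Q$ (which absorbs the lengths of $x_1,\ldots,x_m$ as words in $q_1,\ldots,q_n$).

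Next I would control the length of $z$. Since $g$ and $g_1$ are conjugate and $g = bx$, $g_1 = b_1 x$, we already know $\Lambda$ is nonempty, and one can take $z$ to be any element of $\Lambda$ of minimal word length; alternatively, and more cleanly, one observes that the algorithm in \cite{aszl1996multiplicative} applied to the multiple orbit problem in $V = \Q^s/N_x\Q^s$ returns a solution whose length is polynomially bounded in the input size, which is polynomial in $\mathcal{L}$ and the (logarithms of the) entries of $M_x$ and of $\bar b, \bar b_1$. Since all of these are at most exponential in $\mathcal{L}$, the length of $z$ is polynomially bounded in $\mathcal{L}$, hence certainly bounded by $K_0^\mathcal{L}$ for a suitable constant $K_0$ depending on $G$ only. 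I would then invoke Proposition \ref{Texponential}, which gives $|T| \leq K_1^\mathcal{L}$, and combine: the length of $y$ is at most (length of $z$) $+ C\cdot|T| \leq K_0^\mathcal{L} + CK_1^\mathcal{L} \leq K^\mathcal{L}$ for an appropriate $K$ depending on $G$ only. For the special case $Q \leq \Gamma_{s_1,s_2}$, one repeats the same argument but substitutes the polynomial bound $|T| \leq K'\mathcal{L}^{s^2}$ from the preceding proposition, and notes that in this polycyclic setting the multiple orbit step still contributes only a polynomially bounded length, giving the total bound $K\mathcal{L}^{s^2}$.

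The main obstacle I anticipate is making the bound on the length of $z$ (the particular solution returned by the multiple orbit algorithm) genuinely rigorous: one has to be careful that the output of the algorithm of \cite{aszl1996multiplicative} is expressed in terms of the fixed free generators $q_1,\ldots,q_n$ of $Q$ with a length bound that is at worst exponential (resp. polynomial) in $\mathcal{L}$, and that translating between "size of exponent vector" and "word length in the $q_l$" costs only a constant factor. This is essentially bookkeeping, since the constants are all allowed to depend on $G$, but it is the step where the argument could look hand-wavy if not stated carefully; everything else is a direct assembly of Proposition \ref{Texponential}, the box lemma, and the relation $|Q_1/Q_2| \leq |T|$.
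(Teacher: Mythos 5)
Your argument is, on the portion the paper actually supplies, identical to the paper's: the corollary appears there with no separate proof, being read off from the algorithm's analysis exactly as you describe --- a valid $y$ lies in $z\Omega$, the box lemma bounds word lengths of elements of $\Omega$ by a constant times $|Q_1/Q_2|\leq |T|$, and Proposition \ref{Texponential} (resp.\ the proposition for $\Gamma_{s_1,s_2}$) bounds $|T|$ exponentially (resp.\ polynomially) in $\mathcal{L}$. The genuine gap is the one you yourself flagged, and your proposed repair does not close it: the length of the particular solution $z$ of the orbit problem is controlled by the \emph{input} to that problem, and the input includes $\bar b$ and $\bar b_1$, whose sizes are not controlled by $\mathcal{L}$. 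Your sentence ``since all of these are at most exponential in $\mathcal{L}$'' is exactly where the argument fails, because $b$ and $b_1$ are arbitrary elements of $B$.

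In fact no argument can close this gap, because the statement is false as literally written. Take $s=2$, $M_1=\mathrm{diag}(1,2)$, $M_2=\mathrm{diag}(2,1)$ (these commute, so they define a group in $\mathcal{F}$ with $B=\Z[\tfrac12]^2$), and let $x=q_1$, so that $\mathcal{L}=1$ and $N_x=\mathrm{diag}(0,-1)$. With $b=(1,0)$ and $b_1=(2^m,0)$ the elements $g=bx$ and $g_1=b_1x$ are conjugate (by $q_2^m$), but every conjugator has $Q$-part of the form $q_1^aq_2^m$, hence of length at least $m$: the first coordinate of $B$ is untouched by $(1-x)\cdot B$ and carries no torsion, so the orbit equation forces the exponent of $q_2$ to be exactly $m$. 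Since $m$ is unbounded while $\mathcal{L}=1$, no bound of the form $K^{\mathcal{L}}$ with $K$ depending only on $G$ can hold. The corollary has to be read --- consistently with Theorem \ref{exponential}, which is phrased in terms of the lengths of $g$ and $g_1$ --- with $\mathcal{L}$ replaced by, or augmented with, the word lengths of $g$ and $g_1$; under that reading the contribution of $z$ really is polynomially bounded in the data and your assembly of the box lemma with Proposition \ref{Texponential} goes through. So your instinct that the $z$-step is the delicate one was exactly right; it is not bookkeeping but a needed correction to the statement, and the paper silently omits it.
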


\subsection{Reduction to the Discrete Logarithm Problem}\label{discretelog} For this subsection, we restrict ourselves to the situation of Example \ref{galois} where $Q$ is a multiplicative subgroup of a field $L$ such that $L:\Q$ is a Galois extension and $B$ is the additive group  $\mathcal{O}_L[q_1^{\pm},\ldots,q_n^{\pm}]$ which is sandwiched between $\Q$ and $L$. In particular, this means that the only element in $Q$ with an associated matrix having an eigenvalue of 1 is the identity matrix: the eigenvalues of the matrix representing an element $q\in L$ are precisely $q$ itself and its Galois conjugates and thus cannot be 1 if $q\neq 1$. Recall also that Example \ref{galois} includes Example \ref{genBS}.

\

We will keep the notation of the previous sections, with elements $bx$, $b_1x\in G$ such that there is some $cy\in G$ with (additively)
$$b_1=y\cdot b+(1-x)\cdot c.$$

We may consider $y$ and $1-x$ as elements in the field $L$. From now on we omit the $\cdot$ from our notation and use juxtaposition to denote the action. Now, $B$ also has a ring structure and $(1-x)B$ is an ideal in $B$. Moreover, in this case the quotient ring $\bar{B}=B/(1-x)B$ is finite (because the matrix associated with $1-x$ is regular.) In this finite quotient ring we wish to solve the equation
$$y\bar{b}=\bar{b_1}.$$
  Let $y=q_1^{t_1}\ldots q_k^{t_k}$, then solving the discrete log problem in $\bar{B}=B/(1-x)B$ consists of finding $t_1,\ldots,t_k$ so that
  $$q_1^{t_1}\ldots q_k^{t_k}\bar{b}=\bar{b_1}$$
  in the finite ring $\bar{B}$.
   
\
   
This is a special type of discrete log problem as one can observe by recalling what happens when $Q$ is cyclic: then $x=q_1^s$ for some $s$ thus we have to solve
$$q_1^{t_1}\bar{v}=\bar{w}$$
in $\bar{B}=B/(1-q_1^s)B$. To solve it $s$ trials are sufficient (see \cite{cavallo2014polynomial}).

Let us restrict ourselves further to the case of generalized Baumslag-Solitar groups (i.e., the groups of Example \ref{genBS}.)
We identify the elements $q_l$ with the integers $m_l$ encoding their action. Assume that each $m_l$ is coprime with $1-m_1$. As before let $y=m_1^{t_1}\ldots m_k^{t_k}$ and choose $x=m_1$. 
Then as each $m_l$ is coprime with $1-m_1$,
$$\bar{B}=\Z[m_1^{\pm},\ldots,m_k^{\pm}]/(1-x)\Z[m_1^{\pm},\ldots,m_k^{\pm}]=\Z/(1-x)\Z=\Z_{1-m_1}.$$
We then have to find $t_2,\ldots,t_k$ such that
 $$m_2^{t_2}\ldots m_k^{t_k}\bar{b}=\bar{b_1}$$
in the  ring of integers modulo $1-m_1$. If $k=2$ this is an instance of the ordinary discrete logarithm problem.

\section*{Acknowledgements}
We thank Bren Cavallo who helped us in the beginning stage of this paper.\\
Delaram Kahrobaei is partially supported by a PSC-CUNY grant from the CUNY Research Foundation, the City Tech Foundation, and ONR (Office of Naval Research) grants N000141210758 and N00014-15-1-2164. Conchita Mart\'inez-P\'erez was supported by Gobierno de Arag\'on, European Regional 
Development Funds and partially supported by
MTM2015-67781-P (MINECO/FEDER) 
\bibliographystyle{plain}

\bibliography{lbabib}
\end{document}